\newtheorem{Theorem}{Theorem}[section]
\newtheorem{Lemma}[Theorem]{Lemma}
\newtheorem{theorem}[Theorem]{Theorem}
\newtheorem{lemma}[Theorem]{Lemma}
\newtheorem{proposition}[Theorem]{Proposition}
\newtheorem{remark}[Theorem]{Remark}
\newcommand{\R}{{\mathbb R}}
\newcommand{\cS}{{\mathcal S}}
\newcommand{\al}{\alpha}
\newcommand{\be}{\beta}
\newcommand{\ga}{\gamma}
\newcommand{\Ga}{\Gamma}
\newcommand{\de}{\delta}
\newcommand{\ep}{\epsilon}
\newcommand{\ze}{\zeta}
\newcommand{\De}{\Delta}
\newcommand{\om}{\omega}
\newcommand{\la}{\lambda}
\newcommand{\varep}{\varepsilon}
\newcommand{\codt}{\cdot}
\newcommand{\superset}{\supset}
\newcommand{\rest}{\big\arrowvert}
\newcommand{\upto}{\nearrow}
\newcommand{\downto}{\searrow}
\renewcommand{\superset}{\supset}
\numberwithin{equation}{section}
 \title{On the multiplicity of self-similar solutions of the semilinear heat equation}
 \author{P. Pol\'a\v cik\footnote{Supported in part by NSF Grant DMS-1565388 }
\\
\small School of Mathematics, University of Minnesota\\
\small Minneapolis, MN 55455
\\~\\
 P. Quittner\footnote{Supported in part by VEGA Grant 1/0347/18
 and by the Slovak Research and Development Agency under the contract No. APVV-14-0378} 
\\
\small Department of Applied Mathematics and Statistics, 
       Comenius University,\\
\small Mlynsk\'a dolina, 84248 Bratislava, Slovakia}
\date{} 
\begin{document}

\maketitle

\begin{quote}{\small
    {\bfseries Abstract.}
    In studies of superlinear parabolic equations
\begin{equation*}
  u_t=\Delta u+u^p,\quad x\in {\mathbb R}^N,\ t>0,
\end{equation*}
where $p>1$, backward self-similar solutions play an important role.
These are solutions of the  form $ u(x,t) = (T-t)^{-1/(p-1)}w(y)$, where
$y:=x/\sqrt{T-t}$, $T$ is a constant, and $w$ is
a solution of the equation
$\Delta w-y\cdot\nabla w/2 -w/(p-1)+w^p=0$. We consider (classical)
positive radial solutions $w$ of this equation.
Denoting by $p_S$, $p_{JL}$, $p_L$ the Sobolev, Joseph-Lundgren, and
Lepin exponents, respectively, 
we show that for $p\in (p_S,p_{JL})$ there are only
countably many solutions, and for  
$p\in (p_{JL},p_L)$ there are only finitely many solutions.
This result answers two basic open questions
regarding the multiplicity of the solutions. 
} 
 \end{quote}

{\emph{Key words}: Semilinear heat equation, self-similar solutions,
  multiplicity of solutions, shooting techniques, Laguerre polynomials} 

{\emph{AMS Classification:} 35K57, 35C06, 35B44,  35J61}

\tableofcontents

\section{Introduction}\label{intro}
In this paper, we consider positive radial self-similar solutions of the
semilinear heat equation
\begin{equation}
  \label{eq:sh}
  u_t=\De u+u^p,\quad x\in \R^N,\ t>0,
\end{equation}
where $p>1$ is a real number. A radial (backward) self-similar solution $u$ is a
solution of the form $ u(x,t) = (T-t)^{-1/(p-1)}w(|y|)$, where
$y:=x/\sqrt{T-t}$, $T$ is a constant, and $w$ is a function
in $C^1[0,\infty)$. Such a function $u$ is a (regular) positive  solution
of \eqref{eq:sh} if  $w$ is a solution of the following problem
\begin{align} \label{eq:w}
w_{rr}+\left(\frac{N-1}r-\frac r2\right) w_r-\frac w{p-1}+w^p&=0,\quad
  r>0,\\
  w_r(0)=0,\quad w&>0. \label{eq:cond}
\end{align}

Self-similar solutions have an indispensable role in the theory of
blowup of equation \eqref{eq:sh}. They are examples of solutions
exhibiting  \emph{type-I} 
blowup at time $T$, by which we mean that the rate
of blowup is $(T-t)^{-1/(p-1)}$, the same as in the ordinary
differential equation $u_t=u^p$. In fact, 
they often serve as canonical examples in the sense   
that general solutions with type-I blowup can be proved to
approach in some way a self-similar
solution  as $t$ approaches the blowup time
(see, for example,
\cite{Bebernes-E:char,Giga-K:asym,Giga-K:char,Matos:conv,Matos:self-sim},
or the monograph \cite{Quittner-S:bk} for results of this form). 
Moreover, self-similar solutions play an important role  
in the study of the asymptotic behavior of global solutions
(see \cite{Galaktionov-V:cont} or
\cite[the proof of Theorem 22.4]{Quittner-S:bk}, for example),
in the construction of interesting solutions
(like peaking or homoclinic solutions;
see \cite{Galaktionov-V:cont} or \cite{FY11}, respectively),
in the study of type-II blow-up (see \cite[Proposition 1.8(ii)]{Matano-M:11}), 
etc. 

Equation \eqref{eq:w} has been scrutinized by a number of
authors. To recall known results, we introduce several critical
exponents (they are usually called the Sobolev, Joseph-Lundgren, and
Lepin exponents, respectively):
\begin{align*}
 p_S&:=\begin{cases}
     \frac{N+2}{N-2} &\hbox{ if }N>2, \\
     \infty &\hbox{ if }N\leq2,
   \end{cases}\\
   p_{JL}&:=\begin{cases}
     1+4\frac{N-4+2\sqrt{N-1}}{(N-2)(N-10)} 
        &\hbox{ if }N>10, \\
     \infty &\hbox{ if }N\leq10,
   \end{cases}\\
p_L&:=\begin{cases}
     1+\frac6{N-10}  &\hbox{ if }N>10, \\
     \infty &\hbox{ if }N\leq10.
   \end{cases} 
 \end{align*}
   
 Obviously, the constant $\kappa:=(p-1)^{-1/(p-1)}$ is a solution
 of \eqref{eq:w} for any $p>1$. If $1<p\le p_S$, $\kappa$ is the only
 positive solution  
 \cite{Giga-K:asym,Giga-K:nondeg}.
 For $p_S<p<p_{JL}$, there exist at least countably 
 (infinitely) many  solutions; and for $p_{JL}\le p<p_L$ the existence
 of a (positive) finite number of nonconstant solutions has been
 established
(see \cite{Budd-Qi, Collot-R-S, Fila-P, Galaktionov-V:cont, Lepin, Troy}).
For $p> p_L$, $\kappa$ is again the only positive solution. This was
proved by \cite{Mizoguchi:nonexitence} (the
nonexistence was indicated by  numerical
experiments in the preceding paper \cite{Sverak-P}).
The same seems to be the case for $p=p_L$, as claimed in 
\cite{Mizoguchi:backward}, however the proof given in
\cite{Mizoguchi:backward} is not complete.

A natural and rather basic question, which, despite its importance on
several levels, has been open until
now  is whether there may exist infinitely many solutions for some
$p\in [p_{JL},p_L)$, or uncountably many solutions for some
$p\in (p_S,p_{JL})$. In particular, it is of significance to clarify
whether there might  be continua of solutions
\eqref{eq:w}, \eqref{eq:cond} for some $p\in (p_S,p_{L})$.
For example, by ruling out the possibility that such continua exist,
one could substantially simplify the proofs of 
some results on self-similar asymptotics of blowup solutions of
\eqref{eq:sh}, such as those in 
\cite{Matos:conv} or \cite[Theorem 3.1]{Matano-M:09}.
Also, the problems of finiteness and countability of the set of the
radial self-similar solutions are of great
importance in our study of  entire and ancient solutions of
\eqref{eq:sh}, which will appear in a forthcoming paper
\cite{p-Q:entire}.   

The goal of the present work is to address these problems.
Our main results are stated in the following theorem. 
\begin{theorem}
  \label{thm:main}
  Under the above notation, the following statements are valid.
  \begin{itemize}
  \item[(i)] For any  $p\in (p_S,p_{JL})$ the set of solutions of
    \eqref{eq:w}, \eqref{eq:cond} is infinite and countable. For $p=p_{JL}$
    the set of solutions of 
    \eqref{eq:w}, \eqref{eq:cond} is at most countable. 
  \item[(ii)] For any $p\in (p_{JL},p_L)$ the set of solutions of
    \eqref{eq:w}, \eqref{eq:cond} is finite. 
  \end{itemize}
\end{theorem}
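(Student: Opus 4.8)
The plan is to study the initial value problem for \eqref{eq:w} with $w(0)=\alpha>0$, $w_r(0)=0$, regarded as a shooting problem in the parameter $\alpha$, and to convert the multiplicity questions into statements about the set of $\alpha$ for which the corresponding solution is global, positive, and has the decay $w(r)\to 0$ as $r\to\infty$ consistent with being a genuine self-similar profile. Write $w=w(\cdot;\alpha)$ for the solution of this IVP. The key structural fact, to be established first, is an analyticity property: the map $\alpha\mapsto w(\cdot;\alpha)$ depends analytically on $\alpha$ in a suitable topology (e.g.\ uniformly on compact $r$-intervals, or better in a weighted space adapted to the operator $\Delta_r - \frac r2\partial_r$), and moreover the relevant terminal quantities — the limit $\lim_{r\to\infty} r^{2/(p-1)}w(r;\alpha)$ in the supercritical range, or the asymptotic coefficients governing whether $w$ stays positive — are real-analytic functions of $\alpha$ on the open set of $\alpha$'s for which the solution is global and positive. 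Once one has an analytic function whose zero set (or coincidence set with $\kappa$) characterizes the self-similar solutions, the zero set is either all of the domain or is discrete; ruling out the degenerate case and controlling the behavior at the endpoints of the domain gives countability, and compactness of the parameter range gives finiteness.

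Concretely, I would proceed in the following steps. First, set up the IVP and show local existence, uniqueness, and analytic dependence on $\alpha$; identify the maximal interval of positivity and the ``good'' parameter set $G=\{\alpha>0: w(\cdot;\alpha)$ exists globally and stays positive$\}$. Second, recall (from the literature cited, e.g.\ \cite{Troy,Budd-Qi,Fila-P}) the dichotomy for global positive solutions: either $w\equiv\kappa$, or $w$ has the ``slow'' decay $w(r)\sim c\, r^{-2/(p-1)}$ (these are the self-similar profiles of interest), or $w$ touches zero in finite $r$. Third — this is where the critical exponents enter — analyze the asymptotics as $r\to\infty$ via the change of variables to the ODE at $r=\infty$ (or the transformation $v(s)=r^{2/(p-1)}w(r)$, $s=\ln r$), whose linearization about the singular equilibrium $L=(2(N-2-2/(p-1))/(p-1))^{1/(p-1)}$ (or about $0$) has eigenvalues whose real/complex nature switches exactly at $p_{JL}$; for $p<p_{JL}$ the approach is oscillatory, producing a sequence of solutions indexed by intersection number with $\kappa$ (hence countably many), while for $p>p_{JL}$ the approach is monotone, which is the mechanism forcing finiteness. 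Fourth, define an analytic ``separation'' or ``shooting'' functional on $G$ whose zeros are the self-similar solutions, prove it is not identically zero (using that $\kappa$ is isolated, or a direct transversality computation of the linearized operator at one known solution), deduce discreteness, and finally handle accumulation: show solutions cannot accumulate at $\partial G$ nor at $\alpha=\infty$ (for $p>p_{JL}$, an a priori bound excludes large $\alpha$; near $\alpha=\kappa^{?}$ or the boundary of $G$ one uses continuity of the intersection number), yielding countability for $p\in(p_S,p_{JL})$ and finiteness for $p\in(p_{JL},p_L)$. The role of $p_L$ is to guarantee that a solution with the slow decay actually exists/persists in the shooting, i.e.\ that $G$ is nonempty and the relevant branch does not escape; beyond $p_L$ the branch disappears, consistent with \cite{Mizoguchi:nonexitence}.

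The main obstacle, and the technical heart of the argument, is Step four combined with the accumulation analysis: establishing that the analytic shooting functional is \emph{not} identically zero, and controlling its zeros near the boundary of the good set $G$. Analyticity alone gives ``discrete or everything''; excluding ``everything'' requires showing that the linearization of \eqref{eq:w} about at least one self-similar solution (for instance the constant $\kappa$, using the known spectral information — the linearized operator $\Delta - \frac y2\cdot\nabla + (p\kappa^{p-1} - \frac1{p-1})$ has its spectrum expressible via Laguerre/Hermite functions, cf.\ the ``Laguerre polynomials'' in the keywords) is injective on the relevant space, so that the self-similar profiles are nondegenerate zeros and in particular isolated. The hardest quantitative piece is the passage from ``discrete'' to ``countable'' versus ``finite'': one must prove that in the range $(p_S,p_{JL})$ the profiles can accumulate only at the endpoints of the $\alpha$-interval (giving a countable set that is in fact infinite, the infinitude coming from the oscillatory asymptotics producing solutions of every sufficiently large intersection number), whereas for $p\in(p_{JL},p_L)$ the combination of monotone asymptotics, an a priori upper bound on $\alpha=w(0)$, and nondegeneracy confines the zero set to a compact subset of $G$, forcing finiteness. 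Carrying out the delicate ODE asymptotics uniformly in $\alpha$ near $\partial G$ — in particular ruling out a sequence of profiles whose maximal positivity radius tends to a finite limit or whose $w(0)$ tends to a boundary value — is the step I expect to require the most careful work.
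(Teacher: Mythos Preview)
Your overall architecture --- analyticity of a shooting map, hence discreteness of its zero set, then control of accumulation at the boundary --- matches the paper's strategy. But two of the steps you sketch have genuine gaps that the paper works hard to fill.

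\textbf{The analytic functional.} You propose an analytic shooting functional built from asymptotic data of $w(\cdot;\alpha)$ at $r=\infty$, e.g.\ $\lim_{r\to\infty} r^{2/(p-1)}w(r;\alpha)$. The obstacle is that $w\mapsto w^p$ is \emph{not} analytic at $w=0$, and every nonconstant profile decays to $0$; so there is no direct reason such a terminal quantity depends analytically on $\alpha$. The paper circumvents this by introducing a \emph{second} shooting, from $r=\infty$: after the change $v(r)=r^{2/(p-1)}w(r)$, $\rho=1/r$, one gets a singular ODE at $\rho=0$ with $v(0)=\ell>0$, and a contraction-mapping argument gives a solution $u(\cdot,\ell)$ depending analytically on $\ell$. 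The two analytic families $w(\cdot,\alpha)$ and $u(\cdot,\ell)$ are then compared at a \emph{finite} interior point $r_0$, producing two analytic curves in $\mathbb R^2$ whose intersections correspond to profiles in $\cS$. The discreteness argument is then local: if $\alpha_0$ is the infimum of accumulation points of $\cS$, the analytic curves must coincide near the image of $\alpha_0$, forcing an open interval of $\alpha$'s to lie in $\cS$ --- contradicting the choice of $\alpha_0$. No nondegeneracy computation at $\kappa$ is needed.

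\textbf{Accumulation at $\alpha=\infty$.} For part (ii) you write that ``an a priori bound excludes large $\alpha$''. No such bound is known in advance; establishing it is the entire content of (ii). What actually happens as $\alpha\to\infty$ (for $p>p_{JL}$) is that $w(\cdot,\alpha)\to\phi_\infty$, the singular solution, and one must show that $\cS$ does not accumulate there. The paper does this by continuing the two-curve picture up to the singular endpoint: the curve from shooting from $0$ limits (as $\alpha\to\infty$) to a point $\zeta_0$ where it meets the curve from shooting from $\infty$ (at $\ell=L$), and one shows the curves are \emph{not} tangent to infinite order there. Concretely, $F'(\zeta_0)=G'(\zeta_0)$ holds iff $\lambda=0$ is an eigenvalue of the linearization at $\phi_\infty$; and when it is, a second-derivative computation (via variation of constants and an explicit Laguerre-polynomial integral, the appendix's Proposition) forces $F''\ne G''$ near $\zeta_0$. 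This spectral/Laguerre analysis lives at $\phi_\infty$, not at $\kappa$; the linearization at $\kappa$ plays no role in the finiteness proof. Your proposal does not supply any mechanism for this step, and the ``monotone asymptotics'' heuristic is not enough --- the delicate case is exactly when the first derivatives match.
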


Very likely, in the case $p=p_{JL}$ the set of solutions is finite, too,
but our proof of the finiteness for $p\in (p_{JL},p_L)$ does not
cover the case $p=p_{JL}$.    
On the other hand, if there exist solutions for $p=p_L$,  
then our proof guarantees that the set of solutions is finite. 
 
Our method relies on two kinds of shooting techniques; one from the
origin, considering a standard initial-value problem for \eqref{eq:w}
at $r=0$, and another one where ``initial conditions'' are prescribed
at $r=\infty$. In the proof of statement (i), we
employ the analyticity of the nonlinearity
$u\mapsto u^p$ in $(0,\infty)$. We prove that
the solutions are isolated, hence
there is at most countably many of them.
Technical difficulties in this proof are caused
by the fact that as $r\to\infty$ the nonconstant
solutions of   \eqref{eq:w}, \eqref{eq:cond} decay to 0,
where we loose the analyticity.   
For the proof of statement (ii), we
show that the solutions cannot accumulate at the
singular solution. This involves a subtle analysis of how solutions of
the initial value problems at $r=0$ and $r=\infty$ behave near the
singular solution.

The paper is organized as follows. In the next section, we introduce
some notation and recall several technical results concerning solutions
of \eqref{eq:w}. In Section \ref{proof1}, we use shooting arguments
to show that the solutions of \eqref{eq:w} are in one-to-one
correspondence with the zeros of a real analytic function. This is
key to showing that the solutions are isolated. In
Section \ref{proof2}, we consider the set of solutions of \eqref{eq:w} 
near a  singular solution and complete 
the proof of  Theorem \ref{thm:main}(ii).

\section{Notation and preliminaries}
\label{prelim}
In the remainder of the paper, it is always assumed that $p>p_S$.

Although equation \eqref{eq:w} has a singularity at $r=0$,
it is well-known and easy to prove by an application
of the Banach fixed point theorem to an integral operator
(see, for example, \cite{Haraux-W}) that for
each $\al>0$ there is a unique local solution of \eqref{eq:w}  
satisfying the initial conditions
\begin{equation}
  \label{eq:ic}
   w_r(0)=0,\ w(0)=\al. 
\end{equation}
We denote this solution by $w(r,\al)$ and extend it to its maximal
existence interval. If the solution changes sign, then the nonlinearity
in \eqref{eq:w} is interpreted as $w|w|^{p-1}$. Let
\begin{equation}
  \label{eq:1}
  \cS:=\{\al>0: w(r,\al)>0\ \,(r\in (0,\infty))\}.
\end{equation}
Obviously, for each solution $w$ of \eqref{eq:w} one has
$w=w(\cdot,\al)$ for some (unique) $\al\in \cS$.
We further denote
\begin{equation}
  \label{eq:3}
  \phi_\infty(x):=L|x|^{-2/(p-1)}, \quad
L:=\left(\frac2{(p-1)^2}((N-2)p-N)\right)^{\frac1{p-1}}.
\end{equation}
$$ $$
This is a singular solution of \eqref{eq:w} (it is defined
when $p(N-2)>N$). In fact, this is a unique solution of
\eqref{eq:w} with a singularity at $r=0$ (see
\cite{Mizoguchi:backward, Quittner:uniqueness}).

We recall the following properties (as above, $\kappa=(p-1)^{-1/(p-1)}$):
\begin{Lemma}
  \label{le:basic}
 The following statements are valid (for each $p>p_S$).  
 \begin{itemize}
 \item[(i)] One has $\al\ge \kappa$ for all $\al\in \cS$,
   and $\kappa$ is isolated in $\cS$.
 \item[(ii)] For each $\al\in \cS\setminus\{\kappa\}$ there exist a positive
   constant $\ell(\al)$ such that
   \begin{equation}
     \label{eq:2}
     w(r,\al)=\ell(\al)r^{-\frac{2}{p-1}}(1-c(\al)r^{-2}+o(r^{-2}))\quad\text{as
       $r\to\infty$}, 
   \end{equation}
   where $c(\al):=(\ell(\al))^{p-1}-L^{p-1}$ (with $L$ as in
   \eqref{eq:3}). Moreover, one has
   \begin{equation}
     \label{eq:4}
     \frac{w_r(r,\al)}{w(r,\al)}=-\frac{2}{(p-1)}r^{-1}+2c(\al)r^{-3}+o(r^{-3}) 
     \quad\text{as  $r\to\infty$}.
   \end{equation}
   \item[(iii)] With $\ell(\al)$ as in statement (ii),  function
     $\al\mapsto\ell(\al)$ is one-to-one on $\cS\setminus\{\kappa\}$.  
 \end{itemize}
\end{Lemma}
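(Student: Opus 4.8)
The plan is to treat the three statements essentially independently, relying on ODE asymptotics near $r=\infty$ together with the structure of the linearization.

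For part (i), the inequality $\al\ge\kappa$ for $\al\in\cS$ is a standard energy/comparison observation: if $\al<\kappa$ then at $r=0$ one has $-w/(p-1)+w^p<0$, and one shows via the Pohozaev-type identity (or by tracking the quantity $E(r):=\tfrac12 w_r^2-\tfrac{w^2}{2(p-1)}+\tfrac{w^{p+1}}{p+1}$, whose derivative along solutions has a fixed sign from the damping term) that $w(\cdot,\al)$ must hit zero in finite $r$, so $\al\notin\cS$. For the isolatedness of $\kappa$ in $\cS$, I would linearize \eqref{eq:w} at the constant solution $\kappa$. The linearized operator is $v\mapsto v_{rr}+(\tfrac{N-1}{r}-\tfrac r2)v_r+v$ (since $p\kappa^{p-1}-\tfrac1{p-1}=1$), which is the radial Ornstein--Uhlenbeck operator shifted by $1$; its eigenfunctions are Laguerre polynomials and, crucially, for $p>p_S$ the relevant Morse index is positive, so $\kappa$ is a nondegenerate, hyperbolic-type zero of the shooting map. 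A continuity/bifurcation argument then shows that for $\al$ slightly above $\kappa$ the solution $w(\cdot,\al)$ oscillates around $\kappa$ and dips below $0$ — or, more directly, one invokes the known fact (from \cite{Giga-K:asym,Wang}) that near $\kappa$ there are no other positive solutions. I expect the clean way is to use the intersection-comparison (zero-number) argument with $\phi_\infty$ or with $\kappa$ itself.

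For part (ii), the expansion \eqref{eq:2} is the heart of the matter. For $\al\in\cS\setminus\{\kappa\}$, the solution $w(\cdot,\al)$ is a global positive solution that is not identically $\kappa$; by a Lyapunov-function argument $w(r,\al)\to 0$ as $r\to\infty$ (it cannot converge to $\kappa$ since that would force $\al=\kappa$ by uniqueness of the center-stable behavior, and it cannot blow up). Setting $v(r):=r^{2/(p-1)}w(r)$ transforms \eqref{eq:w} into an equation for which $v\equiv L$ is a constant solution and the leading-order linearized equation at $L$ (after substituting $s=r^2/4$ or $t=\log r$) has characteristic roots whose real parts are negative for $p>p_S$; ODE asymptotic-integration theory (Levinson's theorem / the method of successive approximations on the integral equation) then yields $v(r)=\ell(\al)-\ell(\al)c(\al)r^{-2}+o(r^{-2})$ with the indicated relation $c(\al)=\ell(\al)^{p-1}-L^{p-1}$ coming from matching the coefficient of $r^{-2}$ in the equation. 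The formula \eqref{eq:4} for $w_r/w$ then follows by logarithmic differentiation of \eqref{eq:2}, carefully checking that the error term is differentiable (which it is, because the successive-approximation scheme controls $v'$ as well).

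For part (iii), suppose $\al_1\ne\al_2$ in $\cS\setminus\{\kappa\}$ with $\ell(\al_1)=\ell(\al_2)=:\ell$. Then by \eqref{eq:2} the two solutions satisfy $w(r,\al_1)-w(r,\al_2)=\ell\,r^{-2/(p-1)}\bigl(-(c(\al_1)-c(\al_2))r^{-2}+o(r^{-2})\bigr)$, but $c(\al_1)=c(\al_2)=\ell^{p-1}-L^{p-1}$, so the difference is $o(r^{-2/(p-1)-2})$ at infinity. One then pushes the asymptotic expansion one further order, or — more robustly — one argues by backward uniqueness: the function $z:=w(\cdot,\al_1)-w(\cdot,\al_2)$ solves a linear second-order ODE with coefficients that are regular for large $r$, and \eqref{eq:2}--\eqref{eq:4} force both $z$ and $z_r$ (suitably normalized, e.g.\ $r^{2/(p-1)}z$ and its derivative) to vanish faster than any solution of that linear equation possibly can unless $z\equiv 0$; hence $\al_1=\al_2$. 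The main obstacle, and the place requiring the most care, is making the asymptotic expansion \eqref{eq:2} rigorous with a genuine $o(r^{-2})$ remainder (not merely $O$) together with its differentiated form \eqref{eq:4}; this is where the precise spectral condition $p>p_S$ (ensuring the linearization at the singular profile $L$ has no resonant or slowly-decaying modes) must be used, and it is what makes the one-to-one conclusion in (iii) work.
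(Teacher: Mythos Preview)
The paper does not actually prove this lemma: its entire proof is to cite \cite{Mizoguchi:nonexitence} (Lemmas~2.2--2.3) for statement~(i) and \cite[Section~2]{Matos:conv} together with \cite[Lemma~2.1]{Mizoguchi:nonexitence} for statements~(ii)--(iii). So your proposal is inevitably a different route---you are reconstructing arguments that the paper outsources. Your sketches for (ii) and (iii) are in the spirit of those references: the transformation $v(r)=r^{2/(p-1)}w(r)$ and asymptotic integration is exactly the method used, and injectivity in (iii) does come from a uniqueness-at-infinity argument. One slip: you speak of linearizing ``at $L$'', but $v(r)\to\ell(\al)$, which is generally $\ne L$; the expansion is about the actual limit $\ell(\al)$, and the coefficient $c(\al)=\ell(\al)^{p-1}-L^{p-1}$ falls out by matching the $r^{-2}$ term in the transformed equation.

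Your argument for (i), however, has genuine gaps. First, the energy $E(r)=\tfrac12 w_r^2-\tfrac{w^2}{2(p-1)}+\tfrac{w^{p+1}}{p+1}$ satisfies
\[
E'(r)=-\Bigl(\tfrac{N-1}{r}-\tfrac r2\Bigr)w_r^2,
\]
and the coefficient changes sign at $r=\sqrt{2(N-1)}$, so the damping does \emph{not} have fixed sign and the energy monotonicity you invoke fails as stated. Second, and more seriously, the ``nondegenerate, hyperbolic-type zero'' claim for the linearization at $\kappa$ is not justified: the operator $z\mapsto z_{rr}+(\tfrac{N-1}{r}-\tfrac r2)z_r+z$ has eigenvalues $1-j$ ($j=0,1,2,\dots$) in $L^2_\omega$, so it has a nontrivial kernel (the $j=1$ radial Hermite/Laguerre eigenfunction), and $\kappa$ is not nondegenerate in the naive sense. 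Your parenthetical suggestion of an intersection-comparison argument with $\kappa$ or $\phi_\infty$ is closer to what actually works in \cite{Mizoguchi:nonexitence}, but the linearization-based nondegeneracy argument as written does not go through.
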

\begin{proof}
  Statement (i) is proved in  Lemmas 2.2 and 2.3 of
  \cite{Mizoguchi:nonexitence}. Statements (ii), (iii) for regular
  \emph{ bounded} solutions  are proved in \cite[Section~2]{Matos:conv};
  the boundedness assumption can be removed due to 
  \cite[Lemma 2.1]{Mizoguchi:nonexitence}.  
\end{proof}
We need two additional properties of the function $\al\mapsto
\ell(\al)$:
\begin{lemma}
  \label{le:cont}
 Suppose $\al_k\in\cS$, $k=1,2,\dots$, and $\al_k\to\al_0\in (\kappa,\infty]$ as
 $k\to\infty$. The following statements are valid.
 \begin{itemize}
 \item[i)] If $\al_0<\infty$, then $\al_0\in\cS$ and $\ell(\al_k)\to
   \ell(\al_0)$.
 \item[ii)] If $\al_0=\infty$ and $p>p_{JL}$,
   then $\ell(\al_n)\to
   L$.   
 \end{itemize}
\end{lemma}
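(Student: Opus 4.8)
The plan for (i) is to combine continuous dependence on the shooting parameter with a uniform description of the far field. By the usual integral-equation formulation of \eqref{eq:w}, \eqref{eq:ic}, the map $\alpha\mapsto w(\cdot,\alpha)$ is continuous from $(0,\infty)$ into $C^1_{\mathrm{loc}}[0,\infty)$, so $w(\cdot,\alpha_k)\to w(\cdot,\alpha_0)$, together with first derivatives, locally uniformly on the maximal interval of $w(\cdot,\alpha_0)$. Hence $w(\cdot,\alpha_0)\ge 0$; a zero of $w(\cdot,\alpha_0)$ at some $r_0>0$ would be an interior minimum, forcing $w_r(r_0,\alpha_0)=0$ and then, by uniqueness for \eqref{eq:w} at the regular point $r_0$, $w(\cdot,\alpha_0)\equiv 0$, impossible since $w(0,\alpha_0)=\alpha_0>\kappa$. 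So $w(\cdot,\alpha_0)$ is positive, hence global, and $\alpha_0\in\cS$. For $\ell(\alpha_k)\to\ell(\alpha_0)$ I would substitute $\zeta(s):=r^{2/(p-1)}w(r)$, $s:=\log r$, which turns \eqref{eq:w} into
\begin{equation*}
\ddot\zeta+\bigl(a-\tfrac12 e^{2s}\bigr)\dot\zeta+\zeta^{p}-L^{p-1}\zeta=0,\qquad a:=N-2-\tfrac{4}{p-1}>0,
\end{equation*}
with, by \eqref{eq:2}, $\zeta(s,\alpha)\to\ell(\alpha)$ and $\dot\zeta(s,\alpha)\to 0$ as $s\to\infty$. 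For large $s$ the equation slaves $\dot\zeta$: $\dot\zeta\approx 2\bigl(\zeta^{p}-L^{p-1}\zeta\bigr)e^{-2s}$; integrating from $s$ to $\infty$ and bootstrapping yields $|\zeta(s,\alpha_k)-\ell(\alpha_k)|\le C e^{-2s}$ for $s\ge s_0$, which combined with $\zeta(s_0,\alpha_k)\to\zeta(s_0,\alpha_0)$ gives $\ell(\alpha_k)\to\ell(\alpha_0)$ --- \emph{provided} $C$ and $s_0$ can be chosen independent of $k$. That uniformity is the main obstacle in (i): one must exhibit one $s_0$ past which all the $w(\cdot,\alpha_k)$ and $w(\cdot,\alpha_0)$ have entered the asymptotic regime, together with a uniform upper bound on $\sup_{s\ge s_0}\zeta(s,\alpha_k)$ (equivalently on $\ell(\alpha_k)$). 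I would extract these from a compactness argument anchored at $w(\cdot,\alpha_0)$, whose $\zeta$-graph lies in a small neighbourhood of $(\ell(\alpha_0),0)$ for large $s$; such a neighbourhood is, up to a harmless error, forward invariant for the slaved dynamics, and the invariance propagates to the (relatively compact) family $\{\alpha_k\}$.

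For (ii) I would rescale for large $\alpha$. With $\varepsilon_k:=\alpha_k^{1-p}\to 0$ put $v_k(\xi):=\alpha_k^{-1}w(\alpha_k^{-(p-1)/2}\xi,\alpha_k)$; then $v_k$ solves
\begin{equation*}
v''+\Bigl(\tfrac{N-1}{\xi}-\tfrac{\varepsilon_k\xi}{2}\Bigr)v'-\tfrac{\varepsilon_k}{p-1}v+v^{p}=0,\qquad v(0)=1,\ v'(0)=0,
\end{equation*}
so $v_k\to v^{*}$ in $C^1_{\mathrm{loc}}[0,\infty)$, where $v^{*}$ solves the Lane--Emden problem $v''+\tfrac{N-1}{\xi}v'+v^{p}=0$, $v(0)=1$, $v'(0)=0$. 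Since $r^{2/(p-1)}w(r,\alpha_k)=\xi^{2/(p-1)}v_k(\xi)$ for $\xi=\alpha_k^{(p-1)/2}r$, we get $\ell(\alpha_k)=\lim_{\xi\to\infty}\xi^{2/(p-1)}v_k(\xi)$. In the variables $\zeta_k(t):=\xi^{2/(p-1)}v_k(\xi)$, $t:=\log\xi$, this becomes $\ddot\zeta_k+\bigl(a-\tfrac{\varepsilon_k}{2}e^{2t}\bigr)\dot\zeta_k+\zeta_k^{p}-L^{p-1}\zeta_k=0$, with $\zeta_k(t)\to 0$ as $t\to-\infty$ (along the unstable direction, eigenvalue $\tfrac{2}{p-1}$, of the saddle $\zeta=0$ of the $\varepsilon=0$ limit equation $\ddot\zeta+a\dot\zeta+\zeta^{p}-L^{p-1}\zeta=0$, whose other equilibrium is $\zeta=L$), $\zeta_k(+\infty)=\ell(\alpha_k)$, and $\zeta_k\to\zeta^{*}:=\xi^{2/(p-1)}v^{*}(\xi)$ in $C^1_{\mathrm{loc}}$. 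A classical phase-plane analysis of the autonomous limit equation --- the energy $E:=\tfrac12\dot\zeta^{2}+\tfrac{1}{p+1}\zeta^{p+1}-\tfrac{L^{p-1}}{2}\zeta^{2}$ satisfies $\dot E=-a\dot\zeta^{2}<0$, is negative along $\zeta^{*}$ near $\zeta=0$ (so $\zeta^{*}$ is not homoclinic to $0$), and there are no periodic orbits --- forces $\zeta^{*}(t)\to L$, i.e.\ $\xi^{2/(p-1)}v^{*}(\xi)\to L$. (For $p>p_{JL}$ the approach to $L$ is moreover monotone, $L$ being a node rather than a spiral --- the Joseph--Lundgren dichotomy --- which is the property used in the proof of Theorem \ref{thm:main}(ii).)

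To conclude $\ell(\alpha_k)\to L$, fix $\delta>0$ and pick $T$ with $(\zeta^{*},\dot\zeta^{*})(T)$ within $\delta$ of $(L,0)$; by $C^1_{\mathrm{loc}}$ convergence $(\zeta_k,\dot\zeta_k)(T)$ lies within $2\delta$ of $(L,0)$ for all large $k$, and since $\ell(\alpha_k)=\lim_{t\to\infty}\zeta_k(t)$ it suffices to propagate this closeness to all $t\ge T$. This propagation is the crux; the obstacle is that the damping coefficient $a-\tfrac{\varepsilon_k}{2}e^{2t}$ of the $\zeta_k$-equation becomes negative for large $t$, so dissipativity is not available outright. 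I would argue in two stages. On $[T,-\tfrac12\log\varepsilon_k]$ the coefficient $\tfrac{\varepsilon_k}{2}e^{2t}$ stays $\le\tfrac12$ and exceeds $a$ only on a $t$-interval of bounded length, so the net damping is positive off a short, bounded-coefficient excursion, and a Lyapunov estimate based on $E$ keeps $(\zeta_k,\dot\zeta_k)$ within $C\delta$ of $(L,0)$ there. For $t\ge-\tfrac12\log\varepsilon_k$ the equation slaves $\dot\zeta_k\approx 2\bigl(\zeta_k^{p}-L^{p-1}\zeta_k\bigr)\bigl(\varepsilon_k e^{2t}\bigr)^{-1}$, and linearising near $\zeta_k=L$ gives $\zeta_k(t)-L=\bigl(\zeta_k(t_{*})-L\bigr)\exp\!\bigl(\int_{t_{*}}^{t}g\bigr)$ with $t_{*}=-\tfrac12\log\varepsilon_k$, $g(\tau)=2(p-1)L^{p-1}\bigl(\varepsilon_k e^{2\tau}\bigr)^{-1}>0$ and $\int_{t_{*}}^{\infty}g=(p-1)L^{p-1}$, so the residual drift multiplies the distance to $L$ by at most the fixed factor $e^{(p-1)L^{p-1}}$. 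Altogether $|\ell(\alpha_k)-L|\le C\delta$ for large $k$, and $\delta>0$ being arbitrary, $\ell(\alpha_k)\to L$.
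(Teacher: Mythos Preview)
Your argument that $\alpha_0\in\cS$ is fine and matches the paper. The rest diverges substantially, and in (i) the step you yourself flag as ``the main obstacle'' is not actually closed.

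For (i) the paper also passes to $v(r,\alpha)=r^{2/(p-1)}w(r,\alpha)$ (your $\zeta$, in the variable $r$) but does \emph{not} attempt uniform tail asymptotics. Instead it passes to a subsequence with $v(R,\alpha_k)$ strictly monotone, so that on $[R,\infty)$ the $v_k:=v(\cdot,\alpha_k)$ are totally ordered, and studies the positive differences $h_k:=v_k-v_{k+1}$. Each $h_k$ solves a linear equation of the form \eqref{eq:h}, and an elementary sign analysis of that equation shows that $h_k$ (or $e^{-c/r^2}h_k$, depending on whether $pv_0^{p-1}>L^{p-1}$) is nondecreasing on $[R,\infty)$. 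Summing gives $\sum_k h_k^\infty=\lim_{r\to\infty}(v_1-v_0)(r)<\infty$, from which $\ell(\alpha_k)\to\ell(\alpha_0)$ follows by a tail estimate. No uniform control of individual trajectories in the far field is needed.

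Your route, by contrast, relies on such uniform control and does not supply it. The slaved field $\dot\zeta=2(\zeta^p-L^{p-1}\zeta)e^{-2s}$ is \emph{repelling} from every $\zeta\ne L$ (toward $0$ if $\zeta<L$, toward $\infty$ if $\zeta>L$), so a neighbourhood of $(\ell(\alpha_0),0)$ is not forward invariant in any useful sense when $\ell(\alpha_0)\ne L$. The correct statement is that the total drift $\int_{s_0}^\infty e^{-2s}\,ds$ is small; but to use it you need an a priori bound $\sup_{s\ge s_0}\zeta(s,\alpha_k)\le C$ uniform in $k$, which is precisely the conclusion you are after. You also never justify the slaving itself (control of $\ddot\zeta$). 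A continuity/bootstrap argument can close all of this, but you have not written one; the paper's difference-and-monotonicity argument is both shorter and free of this circularity.

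For (ii) the paper simply cites \cite{Mizoguchi:nonexitence}. Your Lane--Emden rescaling and the phase-plane proof that $\zeta^*\to L$ are correct and standard. The two-stage propagation, however, leaves a gap at the handoff: at $t_*=-\tfrac12\log\varepsilon_k$ one has $\varepsilon_k e^{2t_*}=1$, so the coefficient of $\dot\zeta_k$ is $O(1)$ and ``slaving'' is not yet legitimate; your integral $\int_{t_*}^\infty g$ is computed from the slaved equation, not the true one. A clean fix inserts a middle interval $[t_*,t_*+C]$ of fixed length $C$ (depending only on $a$), on which all coefficients are uniformly bounded so the solution drifts by at most a fixed factor, after which (for $t\ge t_*+C$ with $C$ large) the anti-damping $\tfrac{\varepsilon_k}{2}e^{2t}$ genuinely dominates and your slaving estimate becomes valid.
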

\begin{proof}
  Statement (ii) is proved in \cite[Lemma
  2.7]{Mizoguchi:nonexitence}.

  We prove statement (i). It is clearly sufficient to prove that the
statement is valid if the sequence $\{\al_k\}_k$ is replaced by a
subsequence (and then use this conclusion for any subsequence of
$\{\al_k\}_k$ in place of the full sequence $\{\al_k\}_k$).
We may in particular assume that $\al_j\ne \al_k$ if $j\ne k$
(for a constant sequence the statement is trivially true).  

The fact that $\al_0\in\cS$, that is, the solution $w(\cdot,\al_0)$ is
positive, follows easily from the continuity of solutions with respect
to initial data. 

Consider now the function
$v(r,\al):=w(r,\al)r^{2/(p-1)}$. It solves the equation
\begin{equation} \label{eq:v}
v_{rr}+\Bigl(\frac{N-1-4/(p-1)}r-\frac r2\Bigr)v_r+\frac1{r^2}(v^p-{L}^{p-1}v)=0,
\quad r>0.  
\end{equation}
If $\al,\bar\al\in{\cS}$, $\al\ne\bar\al$ and 
$h(r):=v(r,\al)-v(r,\bar\al)$, then $h$ solves the equation
\begin{equation} \label{eq:h}
h_{rr}+\Bigl(\frac{N-1-4/(p-1)}r-\frac r2\Bigr)h_r
  +\frac1{r^2}(pv_\theta^{p-1}-{L}^{p-1})h=0,
\quad r>0,
\end{equation}
where $v_\theta=v_\theta(r)$ belongs to the interval with end points 
$v(r,\al)$ and $v(r,\bar\al)$.

It can be shown that if $R>0$ is sufficiently large, then 
for any $k\ne j$ and $r>R$ one has
$v(r,\al_k)\ne v(r,\al_j)$  (for example, in the proof
of Proposition 2.4 in \cite{Fila-P} it was shown that one can take 
any $R>\sqrt{2N}$).
Fix any $R$ with this property; 
from now on we consider the solutions $v$ on the interval
$[R,\infty)$ only. Passing to a subsequence,
we may assume that the sequence $\{v(R,\alpha_k)\}_{k\ge1}$ is
strictly monotone. Assume that it is decreasing (the other
case is analogous),
hence $v_k:=v(\cdot,\alpha_k)$ satisfy
$v_1>v_2>\dots>v_0$ on $[R,\infty)$, $v_k\to v_0$ in $C_{loc}([R,\infty))$.

Set $h_k:=v_{k}-v_{k+1}$, $k=1,2,\dots$.
Then $h_k$ is positive and it solves \eqref{eq:h}
with $v_{k+1}<v_\theta<v_k$. Moreover,
$h_k\to0$ in $C_{loc}([R,\infty))$
and $\sum_k h_k\leq v_1-v_0\leq C$, for some constant $C>0$. 

First assume 
\begin{equation} \label{v0-large}
pv_0^{p-1}>L^{p-1} \quad\hbox{on }\ [R,\infty).
\end{equation}
If $h_k'(r_0)\leq0$ for some $r_0\geq R$ then \eqref{eq:h}
guarantees $h_k''(r_0)<0$, hence $h_k',h_k''<0$ for $r>r_0$
which contradicts the positivity of $h$.
Consequently, $h_k'\geq0$.

Set $h_k^\infty:=\lim_{r\to\infty} h_k(r)$.
Since 
\begin{equation} \label{v1v0}
v_1(r)-v_0(r)=\sum_{k=1}^\infty h_k(r)\nearrow\sum_{k=1}^\infty h_k^\infty
\quad\hbox{as}\quad r\to\infty,
\end{equation}
we have
$\sum_{k=1}^\infty h_k^\infty<\infty$. Fix $\varep>0$.
Then there exists $k_0$ such that 
$\sum_{k=k_0}^\infty h_k^\infty<\varep$,
hence $v_{j}(r)-v_0(r)=\sum_{k=j}^\infty h_k(r)<\varep$ for any $r\in[R,\infty)$
and $j\geq k_0$.
This implies
$\ell(\alpha_0)<\ell(\alpha_{j})\leq\ell(\alpha_0)+\varep$,
hence $\ell(\alpha_k)\to\ell(\alpha_0)$.

Next assume that \eqref{v0-large} fails.
Notice that there exist $c_1,c_2>0$ such that
$$ \frac{N-1-4/(p-1)}r-\frac r2\leq -c_1r \quad\hbox{and}\quad
  pv_\theta^{p-1}-L^{p-1}\geq-c_2 \quad\hbox{for }\ r\geq R.$$
Set $c_3:=c_2/c_1$.
If $h_k'(r_0)<-c_3h_k(r_0)r_0^{-3}$ for some $r_0\geq R$ 
then \eqref{eq:h} guarantees $h_k''(r_0)<0$,
hence $h_k'(r)<-c_3h_k(r)r^{-3}$ 
and $h_k''(r)<0$ for $r>r_0$
(since $r\mapsto -c_3h_k(r)r^{-3}$ is increasing if $h_k'<0$), 
which contradicts the positivity of $h$.
Consequently, $h_k'\geq -c_3h_k(r)r^{-3}$, or, equivalently,
$(e^{-c_3/(2r^2)}h_k)'\geq0$.

Set $h_k^\infty:=\lim_{r\to\infty} h_k(r)
  =\lim_{r\to\infty}e^{-c_3/(2r^2)}h_k(r)$.  
Now
$$e^{-c_3/(2r^2)}(v_1(r)-v_0(r))=
 \sum_{k=1}^\infty e^{-c_3/(2r^2)}h_k(r)
 \nearrow\sum_{k=1}^\infty h_k^\infty\quad\hbox{as}\quad r\to\infty,$$
and similar arguments as above show  that
$\ell(\alpha_k)\to\ell(\alpha_0)$ again.
\end{proof}

We will also need the following information on the behavior of
the solutions $w(\codt,\al)$ for large $\al$. This result---in fact, a
stronger version of it---is proved in
\cite[Lemma 2.5]{Mizoguchi:nonexitence}.
\begin{lemma}
  \label{le:large}
  Assume that $p>p_{JL}$. Then, as $\al\upto\infty$, one has
  \begin{equation}
    \label{eq:5}
    w(r,\al)\to\phi_\infty(r),\quad w_r(r,\al)\to\phi'_\infty(r),
  \end{equation}
  uniformly for $r$ in any compact subinterval of 
  $(0,\infty)$. 
\end{lemma}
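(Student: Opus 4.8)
The plan is to rescale $w(\cdot,\al)$ about the origin so that, as $\al\to\infty$, the problem reduces to the classical description of the regular radial solution of the Lane--Emden equation $\Delta W+W^p=0$; its singular radial solution is exactly $\phi_\infty$, because with $L$ as in \eqref{eq:3} the function $Lr^{-2/(p-1)}$ solves $\Delta W+W^p=0$ (the extra self-similar terms in \eqref{eq:w} happen to cancel on this profile). Concretely, I would put $v_\al(s):=\al^{-1}w(\al^{-(p-1)/2}s,\al)$, so that $v_\al(0)=1$, $v_\al'(0)=0$ and
\begin{equation*}
v_{ss}+\frac{N-1}{s}\,v_s-\frac{s}{2\al^{p-1}}\,v_s-\frac{v}{(p-1)\al^{p-1}}+v|v|^{p-1}=0,\qquad s>0.
\end{equation*}
The two $\al$-dependent terms vanish locally uniformly in $s$ as $\al\to\infty$, so a priori estimates for this initial value problem (well posed at $s=0$, as in Section~\ref{prelim}) together with continuous dependence give $v_\al\to U$ in $C^1_{loc}([0,\infty))$, where $U$ solves $U_{ss}+\frac{N-1}{s}U_s+U^p=0$, $U(0)=1$, $U'(0)=0$. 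Since $p>p_S$ one has $U>0$ on $[0,\infty)$, and since $p>p_{JL}$ the Joseph--Lundgren analysis shows that $\Psi(s):=U(s)s^{2/(p-1)}$ stays strictly below $L$ and tends (monotonically) to $L$ as $s\to\infty$.

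Next I would pass to $\psi_\al(s):=v_\al(s)s^{2/(p-1)}$, which coincides with $v(\al^{-(p-1)/2}s,\al)$ for $v(r,\al):=w(r,\al)r^{2/(p-1)}$ as in \eqref{eq:v}; thus $\psi_\al(0)=0$ and
\begin{equation*}
\psi_{ss}+\Bigl(\frac{N-1-4/(p-1)}{s}-\frac{s}{2\al^{p-1}}\Bigr)\psi_s+\frac{1}{s^2}\bigl(\psi|\psi|^{p-1}-L^{p-1}\psi\bigr)=0,\qquad s>0,
\end{equation*}
and both the constant $L$ and the function $\Psi$ solve the drift-free version of this equation. Because $w(r,\al)=r^{-2/(p-1)}\psi_\al(\al^{(p-1)/2}r)$ and $\phi_\infty(r)=Lr^{-2/(p-1)}$, it suffices to prove that for every $\varep>0$ there is $S>0$ with $|\psi_\al(s)-L|\le\varep$ for all $s\ge S$ and all sufficiently large $\al$: given a compact $[a,b]\subset(0,\infty)$, for large $\al$ one has $\al^{(p-1)/2}r\ge S$ whenever $r\in[a,b]$, so $|w(r,\al)-\phi_\infty(r)|\le\varep\,r^{-2/(p-1)}$ there, and the convergence of $w_r(\cdot,\al)$ then follows from \eqref{eq:w} and interior estimates. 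The key point is that on $s\in[0,\al^{(p-1)/2}b]$ the drift coefficient $s/(2\al^{p-1})$ is at most $\frac{b}{2}\al^{-(p-1)/2}\to0$, so on this growing interval $\psi_\al$ solves a uniformly small perturbation of the drift-free equation, which has the exact solution $\psi\equiv L$.

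To establish that estimate, fix $\varep\in(0,L)$ and choose $S>0$ with $L-\varep<\Psi(S)<L$ and $|\Psi'(S)|$ small; by the local convergence of the first step, $L-\varep<\psi_\al(S)<L$ and $|\psi_\al'(S)|$ is small for all large $\al$. On $[S,\al^{(p-1)/2}b]$ set $h:=\psi_\al-L$; then $h$ solves a linear second-order equation whose zeroth-order coefficient is $s^{-2}(p|\xi|^{p-1}-L^{p-1})$, with $\xi$ between $\psi_\al$ and $L$, which is positive and of size $\sim(p-1)L^{p-1}s^{-2}$ as long as $\psi_\al$ stays near $L$. The inequality $p>p_{JL}$ is exactly what makes the indicial exponents $\rho_\pm$ of the corresponding Euler equation at $s=\infty$ real and negative (equivalently, $pL^{p-1}\le(N-2)^2/4$), ruling out oscillation of $h$ about $0$; a comparison argument using barriers built on the exact solution $\psi\equiv L$ and on the power solutions $s^{\rho_\pm}$ of that Euler equation then propagates the near-equality $(\psi_\al(S),\psi_\al'(S))\approx(L,0)$ forward, yielding $|\psi_\al(s)-L|\le\varep$ for all $s\ge S$ (and, en route, that $w(\cdot,\al)$ is defined on the relevant interval). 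This is exactly the required estimate.

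The step I expect to be the main obstacle is precisely this last one: propagating control of $\psi_\al$ over the ever-longer interval $[S,\al^{(p-1)/2}b]$, where a naive Gronwall bound on the perturbation is useless. Two features tied to $p>p_{JL}$ make it go through --- the constant $\psi\equiv L$ is an exact solution of both the perturbed and the limiting equations, so there is a common target for comparison, and the linearization about it is non-oscillatory, so a barrier of power type is available. For $p<p_{JL}$ that linearization oscillates, solutions cross $L$ infinitely often, and the conclusion genuinely fails, which is why the hypothesis $p>p_{JL}$ is indispensable.
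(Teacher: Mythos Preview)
The paper does not give its own proof of this lemma: it simply records that a stronger version is proved in \cite[Lemma~2.5]{Mizoguchi:nonexitence}. Your strategy---rescale by $v_\al(s)=\al^{-1}w(\al^{-(p-1)/2}s,\al)$, pass to the Lane--Emden limit $U$, and then use the Joseph--Lundgren fact that for $p>p_{JL}$ the profile $\Psi(s)=U(s)s^{2/(p-1)}$ increases to $L$ without oscillation---is exactly the standard route and is in substance what Mizoguchi does. The first two steps (local $C^1$ convergence $v_\al\to U$, and $\Psi\nearrow L$) are correct as stated.

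You are also right that the whole difficulty sits in the propagation of $|\psi_\al-L|\le\varep$ over the growing interval $[S,\al^{(p-1)/2}b]$, and this step is only sketched. Two points deserve more care before the sketch becomes a proof. First, the exact solution $\psi\equiv L$ alone does not give an upper barrier by a first-touching argument: at a contact point the nonlinear term vanishes and one is left with $h''=-(\text{drift})\,h'$, which does not force the sign of $h'$; what actually pins $\psi_\al$ below $L$ is the phase-plane structure of the Emden--Fowler system in $t=\log s$, where $p>p_{JL}$ makes the equilibrium $(L,0)$ a stable \emph{node} (not a spiral), and the drift perturbation is uniformly $O(\al^{-(p-1)/2})$ on the relevant $t$-interval, so the perturbed trajectory tracks the unperturbed stable manifold. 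Second, your remark that the indicial exponents are ``real and negative'' is correct but needs unpacking: realness is precisely $p\ge p_{JL}$, while in the $\psi$-variable the exponents are $\beta+\tfrac{2}{p-1}$ and $\beta^-+\tfrac{2}{p-1}$ (with $\beta,\beta^-$ as in \eqref{eq:19}, \eqref{eq:19b}), and it is the negativity of $\beta+\tfrac{2}{p-1}=2\la_0$ that you are using. Once these are made precise, your outline is a valid proof along the same lines as the cited reference.
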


In some comparison arguments below, we will employ radial
eigenfunctions of the linearization of \eqref{eq:w}
at the singular solution $\phi_\infty$. Specifically,
we consider the following eigenvalue problem:
\begin{equation}
  \label{eq:17}
  \begin{aligned}
    \psi_{rr}+\left(\frac{N-1}r-\frac r2\right) \psi_r+\left(-\frac
      1{p-1}+\frac{pL^{p-1}}{r^2}+\la\right)\psi=0&,\quad r>0\\
    \psi\in H^1_\om(0,\infty)&.
  \end{aligned}
\end{equation}
Here  $H^1_\om(0,\infty)$ is the usual weighted Sobolev space with the
weight 
\begin{equation} \label{eq:anew}  
\om(r):=r^{N-1}\exp(-r^2/4).
\end{equation} 
The inclusion
$\psi\in H^1_\om(0,\infty)$ means that if $\tilde \psi$ equals $\psi$
or $\psi'$, 
then
\begin{equation*}
  \int_0^\infty \tilde \psi^2(r)\om(r)\,dr<\infty.
\end{equation*}
This eigenvalue problem is well understood.
The following lemma summarizes some 
basic known results
(see \cite{Herrero-V1,Mizoguchi:blowup}).
\begin{lemma}\label{le:HV}  Assume that  $p>p_{JL}$.
  The eigenvalues of \eqref{eq:17} form a sequence explicitly given by
  \begin{equation}
    \label{eq:18}
    \la_j=\frac{\be}2+\frac{1}{p-1}+j,\quad j=0,1,2, \dots,
  \end{equation}
  where
  \begin{equation}
    \label{eq:19}
    \be:=\frac{-(N-2)+\sqrt{(N-2)^2-4pL^{p-1}}}{2}<0.
  \end{equation}
  For $j=0,1,2,\dots$, the eigenfunction corresponding to $\la_j$,
  which is unique up to scalar multiples, has exactly $j$ zeros, all
  of them simple, and satisfies the following asymptotic relations
  with some positive constants $k_j$, $\tilde k_j$:
  \begin{align*}
    \psi_j(r)&=k_jr^{\be}+o(r^{\be})\quad\text{as $r\to0$,}\\
    \psi_j(r)&=\tilde
               k_jr^{-\frac2{p-1}+2\la_j}+o(r^{-\frac2{p-1}+2\la_j})
               \quad\text{as $r\to\infty$.}
  \end{align*}
  If $p_{JL}<p<p_L$, then $\la_2<0$ and if $p=p_L$, then
  $\la_2=0$.   
\end{lemma}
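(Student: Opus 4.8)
\textbf{Proof proposal for Lemma \ref{le:HV}.}

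The plan is to reduce the eigenvalue problem \eqref{eq:17} to a classical one whose spectrum is known explicitly. First I would eliminate the first-order term by the substitution $\psi(r)=e^{r^2/8}r^{-(N-1)/2}\varphi(r)$, which is exactly the transformation diagonalizing the weight $\om$ in \eqref{eq:anew}; after this change the equation becomes a Schrödinger-type equation $-\varphi_{rr}+\bigl(\tfrac{r^2}{16}+\tfrac{\mu}{r^2}+\text{const}\bigr)\varphi=\la\varphi$ on $(0,\infty)$, with $\mu=\tfrac{(N-1)(N-3)}4-pL^{p-1}$ coming from the combination of the centrifugal term $\tfrac{N-1}r\partial_r$ and the potential $\tfrac{pL^{p-1}}{r^2}$ in \eqref{eq:17}, and the $H^1_\om$-condition translating to $\varphi\in L^2(0,\infty)$ with the natural boundary behavior. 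This is the radial part of a $2$-dimensional harmonic oscillator with an inverse-square potential, whose eigenfunctions are, up to the weight, Laguerre polynomials in $r^2$. Concretely I would look for solutions of the form $\psi(r)=r^{\be}P(r^2/4)e^{-r^2/8}\cdot(\text{weight factors})$ and check directly that $P$ must solve Laguerre's equation; the admissible exponent at the origin is the root $\be$ of \eqref{eq:19} (the other root $\be^-=-(N-2)-\be>\be$ is the one that fails, or rather the one that violates $H^1_\om$ once $\be<0$ is small enough — one has to be a little careful here, but for $p>p_{JL}$ the discriminant $(N-2)^2-4pL^{p-1}$ is nonnegative and both roots are real, and exactly the branch $\be$ written in \eqref{eq:19} gives an $H^1_\om$ function).

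Once the equation is in Laguerre form, the eigenvalues come from the quantization condition that the Laguerre-type series terminates into a polynomial of degree $j$, which forces $\la-\la_0=j$ for $j=0,1,2,\dots$, with $\la_0=\tfrac\be2+\tfrac1{p-1}$; this gives \eqref{eq:18}. The eigenfunction for $\la_j$ is then $\psi_j(r)=k_j\, r^{\be}L_j^{(\gamma)}(r^2/4)\,(1+o(1))$ near $r=0$ for an appropriate Laguerre index $\gamma$ (depending on $\be$ and $N$), which immediately yields $\psi_j(r)=k_jr^\be+o(r^\be)$ as $r\to0$ since $L_j^{(\gamma)}(0)\ne0$. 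The zero count is the standard Sturm–Liouville fact that the $j$-th eigenfunction has exactly $j$ simple zeros in the interior, equivalently that $L_j^{(\gamma)}$ has $j$ simple positive roots. For the behavior as $r\to\infty$ I would return to \eqref{eq:17} and analyze the indicial behavior at infinity directly: the equation has two formal power-type solutions with exponents $-\tfrac2{p-1}$ and $-\tfrac2{p-1}+2\la_j$ (this is seen by balancing $-\tfrac r2\psi_r$ against $\la_j\psi$ after factoring out a Gaussian-decaying mode), and the $H^1_\om$ eigenfunction is the one picking up the larger exponent $-\tfrac2{p-1}+2\la_j$, with leading coefficient $\tilde k_j$ whose positivity follows from the positivity of $\psi_j$ near infinity (no zeros beyond the last one) together with the explicit Laguerre normalization. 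Finally, the sign statements $\la_2<0$ for $p_{JL}<p<p_L$ and $\la_2=0$ for $p=p_L$ are a direct computation: plug $j=2$ into \eqref{eq:18}, substitute $L^{p-1}$ from \eqref{eq:3} into \eqref{eq:19}, and check that the inequality $\tfrac\be2+\tfrac1{p-1}+2<0$ rearranges exactly to $p<p_L=1+\tfrac6{N-10}$ (in dimension $N>10$), with equality at $p=p_L$.

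The main obstacle is the rigorous bookkeeping at the two singular endpoints: one must verify carefully that, among the two indicial roots at $r=0$ and the two decay modes at $r=\infty$, the $H^1_\om$-admissible eigenfunction is the one claimed, and that this selection is consistent at both ends simultaneously (so that the spectrum is genuinely the discrete sequence \eqref{eq:18} and not larger). This is where the restriction $p>p_{JL}$ is essential — it guarantees $(N-2)^2-4pL^{p-1}\ge0$, so $\be$ is real and the oscillatory/non-oscillatory dichotomy at the origin is the benign one; for $p\le p_{JL}$ the exponent $\be$ becomes complex and the analysis changes character. Since this lemma is quoted from \cite{Herrero-V1,Mizoguchi:blowup}, in the paper itself one would simply cite those references; the sketch above is how one reconstructs the proof. \Qed
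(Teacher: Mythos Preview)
The paper does not prove this lemma at all: it merely cites \cite{Herrero-V1,Mizoguchi:blowup}, and you correctly note this at the end of your proposal. Your sketch---reduce \eqref{eq:17} by the substitution $\psi(r)=r^{\be}P(r^2/4)$ to Kummer's (equivalently, Laguerre's) equation and read off the quantization $\la_j=\la_0+j$---is precisely the argument carried out in those references; indeed the paper itself later uses the explicit representation $\psi_j(r)=r^{\be}M(-j,\be+N/2,r^2/4)$ from \cite{Herrero-V1,Mizoguchi:blowup} in Step~5 of Section~\ref{proof2}.

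Two small inaccuracies in your sketch, neither fatal: first, the inequality for the second indicial root should read $\be^-<\be$ (both are negative, $\be^-$ is the more singular one and is the branch excluded from $H^1_\om$; cf.~\eqref{eq:19b}). Second, at $r=\infty$ the two fundamental modes are not both power-type: one is the algebraic mode $r^{-2/(p-1)+2\la}$ obtained by balancing $-\tfrac r2\psi_r$ against $(\la-\tfrac1{p-1})\psi$, and the other grows like $e^{r^2/4}$ (coming from the $\psi_{rr}$ term); it is the exponentially growing mode, not a second power, that is discarded by the $H^1_\om$ condition.
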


The following result, which is a part of analysis used
in \cite{Herrero-V1,Mizoguchi:blowup},
will also be useful below. It can be easily derived from
well-known properties of Kummer's equation 
(as shown in \cite{Herrero-V1,Mizoguchi:blowup}).
Consider the following equation (the same equation as in
\eqref{eq:17}, but with $\la=0$).
\begin{equation}
  \label{eq:170}
    \psi_{rr}+\left(\frac{N-1}r-\frac r2\right) \psi_r+\left(-\frac
      1{p-1}+\frac{pL^{p-1}}{r^2}\right)\psi=0,\quad r>0
  \end{equation}

\begin{lemma}
  \label{le:anal}  Assume that $p>p_{JL}$. Equation \eqref{eq:170} has
  (linearly independent)
  solutions $\psi_1$, $\psi_2$ satisfying the following 
  asymptotic relations with some positive constants
  $\kappa_1$, $\kappa_2$: 
  \begin{align}\label{eq:30a}
    \psi_1(r)&=\kappa_1  r^{\be}+o(r^{\be})\quad\text{as $r\to0$,}\\
    \psi_2(r)&=\kappa_2  r^{\be^-}+o(r^{\be})\quad\text{as $r\to0$.}
             \label{eq:30b}
  \end{align}
  Here $\be$ is as in \eqref{eq:19} and
 \begin{equation}
    \label{eq:19b}
    \be^-:=\frac{-(N-2)-\sqrt{(N-2)^2-4pL^{p-1}}}{2}<\be<0.
  \end{equation}
  Problem \eqref{eq:17} has $\la=0$ as an eigenvalue precisely when
  $\psi_1$ also satisfies the following 
  asymptotic relation with some positive constant
  $\tilde \kappa_1$
  \begin{equation}
    \label{eq:31}
      \psi_1(r)= \tilde \kappa_1
          r^{-\frac2{p-1}}+o(r^{-\frac2{p-1}})
             \quad\text{as $r\to\infty$}.
  \end{equation}
\end{lemma}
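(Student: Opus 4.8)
The plan is to derive Lemma~\ref{le:anal} from the classical theory of Kummer's confluent hypergeometric equation, following the change of variables that is standard for \eqref{eq:170}. First I would substitute $\psi(r)=r^{\be}g(s)$ with $s:=r^2/4$ (the exponent $\be$ being chosen precisely so that the $r^{-2}$ term is absorbed), which transforms \eqref{eq:170} into Kummer's equation $s g''+(b-s)g'-a g=0$ for suitable parameters $a,b$ determined by $N$, $p$, $\be$, and $L$; the relevant fact is that $b=1+\frac{\sqrt{(N-2)^2-4pL^{p-1}}}{2}$ is non-integer (this is exactly where $p>p_{JL}$, equivalently $(N-2)^2-4pL^{p-1}>0$, is used, and where one also checks $b>1$ so that the two Frobenius exponents $0$ and $1-b$ at $s=0$ are genuinely distinct). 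Then the two standard solutions of Kummer's equation, $M(a,b,s)$ and $s^{1-b}M(a-b+1,2-b,s)$, pull back to two linearly independent solutions $\psi_1,\psi_2$ of \eqref{eq:170} with the leading behaviors $r^{\be}$ and $r^{\be+2(1-b)}=r^{\be^-}$ as $r\to 0$, which is exactly \eqref{eq:30a}--\eqref{eq:30b}, with the constants $\kappa_1,\kappa_2$ coming from the normalizations $M(a,b,0)=1$.

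For the behavior as $r\to\infty$, I would invoke the connection formula for Kummer's function: as $s\to\infty$, $M(a,b,s)$ is asymptotically $\frac{\Gamma(b)}{\Gamma(a)}e^{s}s^{a-b}$ plus a term of order $\frac{\Gamma(b)}{\Gamma(b-a)}s^{-a}$, so the behavior of $\psi_1$ at infinity is governed entirely by whether the coefficient $\frac{\Gamma(b)}{\Gamma(a)}$ of the exponentially growing piece vanishes, i.e.\ by whether $a$ is a non-positive integer. One then computes that $a$ is a non-positive integer exactly when $-\frac1{p-1}-\frac{\be}{2}\in\{0,1,2,\dots\}$, which by \eqref{eq:18} is precisely the condition that $\la=0$ is one of the eigenvalues $\la_j$ of \eqref{eq:17}. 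When this holds, $M(a,b,s)$ reduces to (a multiple of) a generalized Laguerre polynomial, the exponential term is absent, and $\psi_1(r)$ decays like $s^{-a}=r^{-2a}$; a short computation identifies $-2a$ with $-\frac{2}{p-1}$, giving \eqref{eq:31} with $\tilde\kappa_1>0$. Conversely, if $a$ is not a non-positive integer, $\psi_1$ grows exponentially and so cannot lie in $H^1_\om$, so it cannot be (proportional to) an eigenfunction; hence $\la=0$ fails to be an eigenvalue, and the stated equivalence holds.

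The main obstacle, such as it is, lies less in any single deep step than in keeping the bookkeeping of the three changes of variable straight and matching signs and exponents: one must verify carefully that the substitution $\psi=r^{\be}g(r^2/4)$ yields Kummer's equation with the specific parameters claimed, that the second Frobenius solution at $0$ really produces the exponent $\be^-$ of \eqref{eq:19b} (the identity $\be+\be^-=-(N-2)$ makes this transparent), and—most delicately—that the arithmetic condition "$a\in\{0,-1,-2,\dots\}$" translates exactly into "$\la=0\in\{\la_j\}$" via \eqref{eq:18}. Since all of this is routine manipulation of Kummer's equation and is already carried out in \cite{Herrero-V1,Mizoguchi:blowup}, I would keep the proof brief, citing those references for the detailed computations and emphasizing only the dichotomy (polynomial versus exponential growth of $\psi_1$ at infinity) that yields the final claim. \Qed
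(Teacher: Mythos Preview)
Your approach is exactly what the paper indicates: the paper does not actually prove Lemma~\ref{le:anal} but merely remarks that it ``can be easily derived from well-known properties of Kummer's equation (as shown in \cite{Herrero-V1,Mizoguchi:blowup}),'' and your substitution $\psi(r)=r^{\be}g(r^2/4)$ together with the polynomial-versus-exponential dichotomy for $M(a,b,\cdot)$ is precisely that derivation (the paper later uses the same representation $\psi_\infty(r)=r^{\be}M(-j,\be+\tfrac{N}{2},r^2/4)$ in Step~5 of Section~\ref{proof2}). One small bookkeeping slip of the very kind you warn about: when $a=-j$ one has $M(-j,b,s)\sim c\,s^{-a}$, so $\psi_1(r)=r^{\be}g(r^2/4)$ behaves like $r^{\be-2a}$ rather than $r^{-2a}$, and it is $\be-2a$ (not $-2a$) that equals $-\tfrac{2}{p-1}$, since $a=\tfrac{\be}{2}+\tfrac{1}{p-1}$.
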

Obviously, if \eqref{eq:31} holds, then
 $\psi_1$ is an eigenfunction corresponding to the
 eigenvalue $\la=0$.
 We remark that  $\psi_2$ cannot be an eigenfunction of \eqref{eq:17},
for \eqref{eq:30b}, \eqref{eq:19b} imply that it is not in
$H^1_\om$. 

We conclude this section  with a monotonicity property of
the function $\al\mapsto w(\codt,\al)$. It will be useful to note that
on any interval 
$(0,r_0]$ where $w(\codt,\al)>0$, 
$w_\al(\codt,\al)$ satisfies the linear equation
 \begin{equation}
  \label{eq:lin1}
  z_{rr}+\left(\frac{N-1}r-\frac r2\right) z_r+\left(-\frac
    1{p-1}+p(w(r,\al))^{p-1}\right)z=0,
\end{equation}
and it also  satisfies the initial conditions $w_\al(0,\al)=1$,
$w_{\al r}(0,\al)=0$. 

\begin{lemma}
  \label {le:pos}
  Assume $p>p_{JL}$. There exist positive
  constants $\al^*$, $R$, and $C_1$ such that for all $\al>\al^*$ and
  $r\in [0,R]$    one has
  \begin{align}
    \label{eq:241}
    w_\al(r,\al)&>0\quad (r\in [0,R])\\
    \frac{w_\al(r,\al)}{ w_\al(r_0,\al)}&\le C_1r^{\be}\quad (r\in
      [0,R],\  r_0\in [R/2,R]), \label{eq:242}
 \end{align}
 where $\be$ is as in \eqref{eq:19}.
\end{lemma}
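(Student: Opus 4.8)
The plan is to analyze the linear equation \eqref{eq:lin1} satisfied by $z=w_\al(\cdot,\al)$ in the regime where $\al$ is large, using Lemma~\ref{le:large} to compare the potential $-1/(p-1)+p\,w(r,\al)^{p-1}$ with the corresponding potential $-1/(p-1)+p\,L^{p-1}r^{-2}$ of the singular solution $\phi_\infty$, which is precisely the potential in \eqref{eq:170}. Since $w(r,\al)\to\phi_\infty(r)$ together with its derivative uniformly on compact subintervals of $(0,\infty)$, for any fixed small $\de>0$ and fixed $R>0$ one gets that on $[\de,R]$ the coefficient $p\,w(r,\al)^{p-1}$ is within $o(1)$ (as $\al\to\infty$) of $p\,L^{p-1}r^{-2}$. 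The difficulty is that this convergence degenerates near $r=0$, where $w(r,\al)\to\infty$ and the potential behaves very differently from the limiting one; so the argument near $r=0$ must be handled separately.

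First I would treat the inner region $r\in[0,\de]$. Here I would use the integral-equation / fixed-point representation of $w(r,\al)$ near $r=0$ (the same Banach fixed point setup recalled after \eqref{eq:ic}), differentiate in $\al$, and show that for $\al$ large the rescaled profile and its $\al$-derivative are well approximated by those of the explicit solution of the ODE obtained by dropping the lower-order terms; concretely, after the substitution $r\mapsto$ a stretched variable adapted to $\al$, equation \eqref{eq:lin1} for $z=w_\al$ limits to (a Bessel-type equation equivalent to) the Euler equation $z_{rr}+((N-1)/r)z_r+pL^{p-1}r^{-2}z=0$ whose solutions are $r^{\be}$ and $r^{\be^-}$. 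The regular initial data $w_\al(0,\al)=1$, $w_{\al r}(0,\al)=0$ single out the solution that stays bounded/positive, which matches the $r^\be$ branch up to the matching point, giving both positivity of $w_\al$ on $[0,\de]$ for $\al>\al^*$ and the bound $w_\al(r,\al)\le C r^{\be}w_\al(\de,\al)$ there.

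Next I would treat the outer region $r\in[\de,R]$. On this interval the potential in \eqref{eq:lin1} is a small perturbation of that in \eqref{eq:170}, uniformly in $\al>\al^*$ (shrinking $\al^*$, $R$ and $\de$ as needed). I would then invoke Lemma~\ref{le:anal}: the solutions $\psi_1,\psi_2$ of \eqref{eq:170}, with $\psi_1\sim\kappa_1 r^\be$, $\psi_2\sim\kappa_2 r^{\be^-}$ as $r\to0$, span the solution space, and a standard continuity/perturbation argument (Gronwall on the integral form, or continuous dependence of the fundamental matrix on coefficients) shows that the solution $z=w_\al(\cdot,\al)$, which at $r=\de$ is close to a positive multiple of $\psi_1$, stays positive on $[\de,R]$ and obeys $z(r)\le C_1' r^\be z(r_0)$ for $r_0\in[R/2,R]$, for $\al$ large enough. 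Here one uses that $\psi_1$ has no zeros near $0$ (being the principal branch) and can be taken positive on a fixed interval $[\de,R]$ after fixing $R$ small; choosing $R$ so that $\psi_1>0$ on $(0,R]$ is where the hypothesis $p>p_{JL}$ (hence $\be<0$ real and $\psi_1$ of one sign near $0$) is used.

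Finally I would splice the two regions: combining the inner estimate $w_\al(r,\al)\le C r^{\be}w_\al(\de,\al)$ on $[0,\de]$ with the outer estimate relating $w_\al(\de,\al)$, $w_\al(r,\al)$ on $[\de,R]$, and $w_\al(r_0,\al)$ for $r_0\in[R/2,R]$, yields \eqref{eq:241} and \eqref{eq:242} with a single constant $C_1$ and threshold $\al^*$. The main obstacle I anticipate is making the inner-region analysis rigorous and uniform in $\al$: one must show that the $\al$-derivative $w_\al$ genuinely tracks the $r^\be$ branch all the way from $r=0$ out to the fixed matching point $r=\de$, despite the fact that the limiting profile $\phi_\infty$ is singular at $0$ and the convergence in Lemma~\ref{le:large} is only locally uniform away from $0$; this likely requires a careful rescaling argument (blowing up near $r=0$ at the scale at which $w(r,\al)\approx\kappa$ crosses over to $w(r,\al)\approx\phi_\infty(r)$) together with a monotonicity/Sturm-type argument to preserve positivity of $w_\al$ across the transition.
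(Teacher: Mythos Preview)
Your outline is plausible in spirit but takes a substantially harder route than the paper, and the step you yourself flag as the ``main obstacle'' is exactly where the argument is incomplete.  The paper avoids the inner/outer matching entirely by a global Sturm comparison.  The key ingredient you are missing is the pointwise ordering
\[
  w(r,\al)<\phi_\infty(r)\qquad (r\in[0,2R]),
\]
valid for all sufficiently large $\al$, \emph{including at $r=0$}.  This is obtained (following \cite{Mizoguchi:nonexitence}) not from the local convergence of Lemma~\ref{le:large}, which as you note degenerates at $0$, but by a Sturmian comparison of the linear equation satisfied by $\phi_\infty-w(\cdot,\al)$ against an eigenfunction $\psi_j$ of \eqref{eq:17} with a \emph{positive} eigenvalue $\la_j$; one then takes $R=r_1/2$ where $r_1$ is the first zero of $\psi_j$.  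Once $w<\phi_\infty$ holds on all of $[0,2R]$, the zero-order coefficient in \eqref{eq:lin1} is strictly smaller than that in the equation for $\psi_j$, so a second Sturm comparison shows $w_\al(\cdot,\al)$ cannot vanish before $\psi_j$ does; positivity \eqref{eq:241} follows immediately from $w_\al(0,\al)=1$.  For \eqref{eq:242}, the same potential ordering together with a Harnack bound on $\tilde\psi:=w_\al/w_\al(r_0,\al)$ over $[R/2,R]$ (the coefficients of \eqref{eq:lin1} are uniformly bounded there by $w<\phi_\infty$) gives $\tilde\psi\le C_2\psi_j$ on $[R/2,R]$, and the comparison then propagates down to $r=0$, yielding $\tilde\psi\le C_1 r^\be$ via the known asymptotics $\psi_j\sim k_j r^\be$.

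Compared with this, your scheme has two weaknesses.  First, on the outer region you only have that the potential of \eqref{eq:lin1} is \emph{close} to that of \eqref{eq:170}, not that it is \emph{ordered} relative to it; perturbation/Gronwall arguments then give closeness of solutions but not the one-sided bound \eqref{eq:242} with a uniform constant.  Second, your inner-region step (``$w_\al$ tracks the $r^\be$ branch from $r=0$ out to $r=\de$'') is not justified: the equation \eqref{eq:lin1} is regular at $0$ and $w_\al$ is the bounded solution there, while $r^\be$ is singular, so the transition from bounded behavior to $r^\be$ behavior happens across an $\al$-dependent boundary layer whose analysis requires a genuine rescaling/matching argument that you only sketch.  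The paper's Sturm comparison with $\psi_j$ (which already has the $r^\be$ asymptotics built in) handles both regions at once and needs no matching.
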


\begin{proof}
  All arguments needed for the proof of these estimates are
  essentially
  given in the proof
  of Lemma 2.8 of \cite{Mizoguchi:nonexitence},
  although the estimates  are not formulated
  there explicitly for the same functions.  We
  give a brief sketch of the proof.
 
  Take an eigenfunction $\psi_j$ of \eqref{eq:17} corresponding to a
  positive eigenvalue $\la_j$ and  let $r_1>0$ be its first zero.
  We will assume that $\psi_j>0$ in $(0,r_1)$
  (replace $\psi_j$ by $-\psi_j$ if
  necessary). Set
  $R=r_1/2$. 
  Considering the linear equation for   
  $\phi_\infty-w(\codt,\al)$ and using a Sturmian comparison with
  $\psi_j$, it is shown in the proof of Lemma 2.8 of
  \cite{Mizoguchi:nonexitence} that if $\al$ is large
  enough, then the first
  zero of   $\phi_\infty-w(\codt,\al)$ is greater  than $r_1$,
  that is,
  \begin{equation}
    \label{eq:27}
    w(r,\al)<\phi_\infty(r)\quad (r\in [0,2R]). 
  \end{equation}
  Now, considering the linear equation \eqref{eq:lin1}
  satisfied by $w_\al(\cdot,\al)$ 
and using the same kind 
  of Sturmian comparison with $\psi_j$, one shows that $w_\al(\cdot,\al)\ne 0$
  in $[0,r_1]$.   Since $w_\al(0,\al)=1$, we have
  $w_\al(\cdot,\al)>0$ 
  in $[0,R]$, proving \eqref{eq:241}. 

  An inequality similar to in \eqref{eq:242} is
  also proved
  in \cite[Proof of Lemma 2.8]{Mizoguchi:nonexitence}, but for a different
  function in place of $\tilde\psi(r,\al):={w_\al(r,\al)}/{
    w_\al(r_0,\al)}$. 
  However,  the same arguments apply to the function
  $\tilde\psi(r,\al)$ upon noting that for all large enough $\al$ the
  function 
  $\tilde\psi(r,\al)$ has the following two properties:
  \begin{itemize}
  \item[(1)] Equation \eqref{eq:lin1} has the zero order coefficient
  which is in  $[0,2R]$ smaller  than the zero order coefficient in 
  the equation for $\psi_j$, see \eqref{eq:17}.
  \item[(2)] One has
  $\tilde\psi(\cdot,\al)<C_2\psi_j$ on $[R/2,R]$, where  $C_2$ is a
  constant independent of $\al$. 
  \end{itemize}
  Property (1) follows from \eqref{eq:27}. To prove (2), we use the
  Harnack inequality for $\tilde\psi(r,\al)$---a positive solution of
  \eqref{eq:lin1}. Note that the coefficients of  \eqref{eq:lin1} are
  bounded in $[R/4,2R]$ uniformly in $\al$. This follows from
  \eqref{eq:27}. Since $\tilde\psi(r_0,\al)=1$, the Harnack inequality
  yields a uniform upper bound on $\tilde\psi(\cdot,\al)$ in $[R/2,R]$.
  Property (2) follows from this and the positivity of $\psi_j$
  on the interval  $(0,r_1]\superset [R/2,R]$. 
\end{proof}
\section{Shooting techniques and the proof of 
  Theorem \ref{thm:main}(i)}  
\label{proof1}
In this section, we employ two kinds of shooting arguments. The first one
is a standard shooting technique for \eqref{eq:w}, \eqref{eq:ic}
(shooting from $r=0$). The
second one is a kind of shooting from $r=\infty$, which becomes a more
standard shooting technique after equation \eqref{eq:w} is transformed
suitably. Shooting arguments of such sort were already used in
\cite{Lepin}, cf. also \cite{NS19}.  

\subsection{Shooting from $r=0$}
\label{shoot1}
We return to the initial-value problem \eqref{eq:w}, \eqref{eq:ic}.
As noted above, a local solution 
can be found in a standard way by applying the Banach fixed point 
theorem to a suitable integral operator. 
Since the nonlinearity $w\mapsto w^p$ is analytic
in intervals not containing $0$, the local solution depends
analytically on $\al$. Away from $r=0$, there are no singularities and
standard theory of ordinary differential equations applies. 
We thus obtain the following regularity property of the
function $w(r,\al)$.

\begin{lemma}
  \label{le:shoot1}
  Given any  $\al_0\in \cS$
  and $r_0\in (0,\infty)$, there is $\ep>0$
  with the following property.
  The solution $w(\cdot,\al)$ is (defined and) positive on
  $[0,2r_0]$ for any $\al\in (\al_0-\ep,\al_0+\ep)$,
  and the function  $w$ is analytic on
 $(0,2r_0) \times(\al_0-\ep,\al_0+\ep)$. 
\end{lemma}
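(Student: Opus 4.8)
The plan is to reduce everything to the standard analytic dependence theorem for ODEs on a region where the right-hand side is analytic, treating the singular point $r=0$ separately by the fixed-point construction. First I would fix $\al_0\in\cS$ and $r_0>0$. Since $w(\cdot,\al_0)$ is positive and continuous on the compact interval $[0,2r_0]$, there is $m>0$ with $w(r,\al_0)\ge m$ there. The first task is to produce $\ep_1>0$ so that $w(\cdot,\al)$ is defined and stays in, say, $[m/2,2\max_{[0,2r_0]}w(\cdot,\al_0)]$ on all of $[0,2r_0]$ for $|\al-\al_0|<\ep_1$. Away from $r=0$ this is just continuous dependence on initial data together with the a priori bound keeping the solution from leaving the strip where the nonlinearity $w\mapsto w^p=w|w|^{p-1}$ is smooth; near $r=0$ one uses that the integral operator from the Banach fixed point construction (as in \cite{Haraux-W}) is a contraction uniformly for $\al$ in a neighborhood of $\al_0$, so the local solution on $[0,\de]$ exists for all such $\al$ and depends continuously (indeed analytically — see below) on $\al$. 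Patching the two pieces at $r=\de$ gives positivity on $[0,2r_0]$.

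Next, for analyticity on $(0,2r_0)\times(\al_0-\ep,\al_0+\ep)$, I would split $(0,2r_0)$ into $(0,\de)$ and $[\de/2,2r_0)$ for a small fixed $\de>0$. On $(0,\de)$: the fixed-point argument can be run with a complex parameter $\al$ in a complex disk, because the integrand, which involves $w^p$ with $w$ close to the real positive value $\al_0$, is a holomorphic function of $(w,\al)$ in a neighborhood of the relevant real values; a uniform contraction estimate then shows the fixed point is holomorphic in $\al$ by the standard argument (limit of a locally uniformly convergent sequence of holomorphic maps, Weierstrass), and joint real-analyticity in $(r,\al)$ follows since the solution is also analytic in $r$ on $(0,\de)$ (the equation has analytic coefficients there) — one can make this rigorous either by Cauchy estimates on the integral operator or by quoting the classical analytic version of the singular-ODE existence theorem. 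On $[\de/2,2r_0)$: equation \eqref{eq:w} is a regular ODE $w_{rr}=F(r,w,w_r)$ with $F$ real-analytic in $(r,w,w_r)$ on the open set where $r\in(0,2r_0)$ and $w$ lies in the strip $(m/2,\infty)$, so the classical theorem on analytic dependence of solutions on initial conditions and parameters (e.g. the analytic version of the Picard–Lindelöf theorem, via majorant series) applies with initial data $\big(w(\de/2,\al),w_r(\de/2,\al)\big)$ taken at $r=\de/2$; since these data are analytic functions of $\al$ (from the previous step) and the flow is analytic, the composition $w$ is analytic on $[\de/2,2r_0)\times(\al_0-\ep,\al_0+\ep)$. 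Overlapping on $[\de/2,\de)$, the two analytic functions agree, giving a single analytic $w$ on the full product.

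The shrinking of the parameter interval to a common $\ep>0$ is then just intersecting finitely many neighborhoods: one from the near-origin contraction estimate, one from continuous dependence on $[\de/2,2r_0]$ keeping the solution positive, and the radius on which the analytic flow is defined; take $\ep$ to be the minimum.

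I expect the only genuinely delicate point to be the analyticity up to (but not including) the singular endpoint $r=0$, i.e.\ extracting holomorphic dependence on $\al$ from the singular integral-operator fixed point and reconciling it with analyticity in $r$. Everything on $[\de/2,2r_0)$ is textbook. The cleanest route for the near-origin part is probably to note that $v(r):=w(r,\al)$ solves the integral equation $w(r)=\al+\int_0^r s^{1-N}e^{s^2/4}\!\int_0^s t^{N-1}e^{-t^2/4}\big(\tfrac{w(t)}{p-1}-w(t)^p\big)\,dt\,ds$, observe the double integral defines a map that is a contraction on a small ball in $C([0,\de])$ uniformly for $\al$ in a complex disk (the kernel is integrable up to $0$ for $N\ge1$ and $p>1$ since $w$ stays bounded away from $0$ and $\infty$), and that this map is holomorphic in $(w,\al)$ jointly; then the analytic implicit/fixed-point theorem in Banach spaces gives a fixed point holomorphic in $\al$, and a separate elementary argument (the coefficients are analytic in $r$ on $(0,\de)$, so the solution is) upgrades this to joint real-analyticity on $(0,\de)\times(\al_0-\ep,\al_0+\ep)$.
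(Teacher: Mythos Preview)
Your proposal is correct and follows essentially the same approach as the paper, which simply notes that the Banach fixed point construction near $r=0$ gives analytic dependence on $\al$ (since $w\mapsto w^p$ is analytic away from $0$), while away from $r=0$ standard ODE theory applies. You have merely fleshed out these two ingredients (the near-origin contraction with a complex parameter and the textbook analytic-flow argument on $[\de/2,2r_0)$) and the patching, which is exactly what the paper leaves implicit.
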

Clearly, if $\al_0\in \cS$, then the
function  $w_\al(\codt,\al_0)$ solves on $(0,\infty)$
the linear equation
\begin{equation}
  \label{eq:lin}
  z_{rr}+\left(\frac{N-1}r-\frac r2\right) z_r+\left(-\frac
    1{p-1}+p(w(r,\al_0))^{p-1}\right)z=0, 
\end{equation}
and satisfies the initial conditions $w_\al(0,\al_0)=1$,
$w_{\al r}(0,\al_0)=0$. In particular, $w_\al(\codt,\al_0)$ is a
nontrivial solution of \eqref{eq:lin} and as such it has only simple
zeros.  

\subsection{Shooting from $r=\infty$}
\label{shoot2}
By Lemma \ref{le:basic}, if $u=w(\codt,\al)$ for some $\al\in
\cS\setminus\{\kappa\}$, then, as
       $r\to\infty$, one has
\begin{equation}
  \label{eq:6}
  u(r)=\ell r^{-\frac{2}{p-1}}(1+o(r^{-1})), \qquad
\frac{u'(r)}{u(r)}=-\frac{2}{(p-1)}r^{-1}+o(r^{-2}),
\end{equation}
for a suitable constant $\ell=\ell(\al)$. 
The same is of course  true, with $\ell=L$, if $u=\phi_\infty$. 
Conditions \eqref{eq:6} can be viewed as a kind of
 ``initial conditions'' at $r=\infty$. 
We show that equation \eqref{eq:w} with these conditions is well posed
and has analytic solutions.  Namely, we prove the following.
\begin{lemma}
  \label{le:shoot2}
  Given any  $\ell_0\in\{L\}\cup \{\ell(\al):\al\in \cS\setminus\{\kappa\}\}$
  and $r_0\in (0,\infty)$, there is $\theta>0$ and an analytic function
  $u:(r_0/2,\infty)\times (\ell_0-\theta,\ell_0+\theta)\to (0,\infty)$ with the
  following properties.
  \begin{itemize}
  \item[(i)] For any $\ell\in (\ell_0-\theta,\ell_0+\theta)$, the
  function $u(\cdot,\ell)$ is a positive solution of \eqref{eq:w} on
  $[r_0/2,\infty)$ satisfying \eqref{eq:6}, and it is the only
  solution (up to extensions and restrictions)   of
   \eqref{eq:w} satisfying \eqref{eq:6}. 
 \item[(ii)] The function  $u_\ell(\codt,\ell_0)$ can be extended to
   $(0,\infty)$, where it satisfies
   the linear equation
   \begin{equation}
  \label{eq:lina}
  z_{rr}+\left(\frac{N-1}r-\frac r2\right) z_r+\left(-\frac 1{p-1}+
    p\phi^{p-1}(r)\right)z=0,
\end{equation}
with $\phi=w(\codt,\al_0)$ if $\ell_0=\ell(\al_0)$ for some
$\al_0\in \cS\setminus\{\kappa\}$, and $\phi=\phi_\infty$ if
$\ell_0=L$.  Moreover, $r^{{2}/{(p-1)}}u_\ell(r,\ell_0)\to 1$ as
$r\to\infty$. 
  \end{itemize}
\end{lemma}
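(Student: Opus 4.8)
The plan is to transform equation \eqref{eq:w} by the change of variables that removes the singularity at $r=\infty$ and turns the asymptotic conditions \eqref{eq:6} into genuine initial conditions at a finite point. Concretely, I would set $v(r)=r^{2/(p-1)}u(r)$ as in the proof of Lemma \ref{le:cont} (recall $v$ solves \eqref{eq:v}), and then substitute $s=r^2/4$ or, better, $t=1/r$ so that $r\to\infty$ becomes $t\to 0$. Under $t=1/r$, writing $\tilde v(t)=v(1/t)$, equation \eqref{eq:v} becomes an ODE of the form
\begin{equation*}
  \tilde v_{tt}+a(t)\tilde v_t+b(t)(\tilde v^p-L^{p-1}\tilde v)=0,\quad t>0,
\end{equation*}
where $a(t)=O(1)$ and $b(t)=t^2+O(t^4)$ near $t=0$, so the point $t=0$ is a regular point of this equation: the nonlinear term is multiplied by $b(t)$ which vanishes to second order, and the coefficient of $\tilde v_t$ is bounded. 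Thus the equation extends smoothly (indeed analytically in $\tilde v$ away from $\tilde v=0$, and in fact the coefficients are analytic in $t$) across $t=0$. The asymptotics \eqref{eq:6} say precisely that $\tilde v(0)=\ell$ and $\tilde v_t(0)=0$ (the condition on $u'/u$ translates, after differentiating $v=r^{2/(p-1)}u$, into $\tilde v_t(0)=0$; one should check the $o$-terms are consistent with $C^1$ regularity at $t=0$, which they are by \eqref{eq:2}, \eqref{eq:4}).

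With this reformulation, statement (i) follows from the standard existence, uniqueness, and analytic-dependence theorem for the initial value problem $\tilde v(0)=\ell$, $\tilde v_t(0)=0$: for $\ell$ near $\ell_0$ there is a unique local solution, it depends analytically on $(t,\ell)$ for $t$ in a neighborhood of $0$ and $\tilde v$ bounded away from $0$, and transforming back gives a solution $u(\cdot,\ell)$ of \eqref{eq:w} on $[R,\infty)$ for some large $R$ satisfying \eqref{eq:6}. Since $u(\cdot,\ell_0)$ is a positive solution on all of $(0,\infty)$ (it equals $w(\cdot,\al_0)$ or $\phi_\infty$), continuity of the flow of \eqref{eq:w} away from $r=0$ lets me extend $u(\cdot,\ell)$ and keep it positive on $[r_0/2,\infty)$ by shrinking $\theta$; analyticity on $(r_0/2,\infty)\times(\ell_0-\theta,\ell_0+\theta)$ then follows from analytic dependence for the regular ODE \eqref{eq:w} on $[r_0/2,R]$ composed with the analytic $u$ on $[R,\infty)$. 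Uniqueness ``up to extensions and restrictions'' is just uniqueness for the initial value problem at $t=0$. For statement (ii), I differentiate the identity $\tilde v(t,\ell)$ (solution of the transformed equation) with respect to $\ell$: $\tilde v_\ell(0,\ell_0)=1$, $\tilde v_{t\ell}(0,\ell_0)=0$, and $\tilde v_\ell$ solves the linearization of the transformed equation. Transforming back, $u_\ell(\cdot,\ell_0)$ solves \eqref{eq:lina} on $[R,\infty)$, and since \eqref{eq:lina} is a regular linear ODE on $(0,\infty)$ the solution extends to all of $(0,\infty)$. The normalization $\tilde v_\ell(0,\ell_0)=1$ means $r^{2/(p-1)}u_\ell(r,\ell_0)\to 1$ as $r\to\infty$ (since $r^{2/(p-1)}u_\ell(r,\ell_0)=\partial_\ell(r^{2/(p-1)}u(r,\ell))|_{\ell_0}=\tilde v_\ell(1/r,\ell_0)\to\tilde v_\ell(0,\ell_0)=1$).

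The main obstacle I anticipate is verifying carefully that the transformed equation really is regular at $t=0$ and, in particular, that the ``initial data'' $(\ell,0)$ at $t=0$ genuinely determine the solution — i.e., that the map from the two-dimensional data space to solutions is the full solution map of a well-posed problem, despite the degeneracy $b(0)=0$ in front of the nonlinear term. Because $b$ vanishes to second order at $t=0$, the equation at $t=0$ degenerates to $\tilde v_{tt}+a(0)\tilde v_t=0$; the point $t=0$ is a regular singular point of the linear part only in the trivial sense that $a$ is bounded, so Picard–Lindelöf applies directly once one writes the equation as a first-order system with right-hand side Lipschitz (indeed analytic) in $(\tilde v,\tilde v_t)$ uniformly for $t$ near $0$ — the degeneracy of $b$ actually helps rather than hurts. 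A secondary technical point is checking that the remainder terms in \eqref{eq:2}, \eqref{eq:4} are compatible with $\tilde v\in C^1$ (equivalently $C^\infty$, by bootstrapping in the regular ODE) at $t=0$ with the stated values; this is where Lemma \ref{le:basic}(ii) is used essentially, and one should make sure the $o(r^{-2})$ in \eqref{eq:2} is enough to pin down $\tilde v_t(0)$ and not merely one-sided continuity. Once these points are settled, everything else is the standard ODE package applied away from the (now removed) singularity.
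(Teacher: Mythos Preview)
Your overall strategy --- the substitutions $v=r^{2/(p-1)}u$ and then $t=1/r$ --- is exactly the paper's, but your central claim about the transformed equation is wrong, and this is a genuine gap. After the two substitutions, writing $\tilde v(t)=v(1/t)$ and using $v_r=-t^2\tilde v_t$, $v_{rr}=t^4\tilde v_{tt}+2t^3\tilde v_t$, equation \eqref{eq:v} becomes (after dividing by $t^4$)
\[
\tilde v_{tt}+\frac{\gamma}{t}\,\tilde v_t+\frac{1}{2t^{3}}\,\tilde v_t+\frac{1}{t^{2}}\bigl(\tilde v^{p}-L^{p-1}\tilde v\bigr)=0,
\qquad \gamma=3-N+\tfrac{4}{p-1},
\]
which is the paper's equation \eqref{eq:y}. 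The coefficient of $\tilde v_t$ is \emph{not} $O(1)$ near $t=0$: it carries a $\tfrac{1}{2t^{3}}$ singularity, coming precisely from the $-r/2$ term in \eqref{eq:v} (equivalently, from the Gaussian weight $e^{-r^2/4}$ in the self-similar variables). Likewise the coefficient in front of the nonlinear term is $t^{-2}$, not $t^{2}$. So $t=0$ is a highly irregular singular point, and Picard--Lindel\"of does not apply; your remark that ``the degeneracy of $b$ actually helps rather than hurts'' is based on a miscomputation.

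This is exactly why the paper cannot simply invoke standard ODE theory and instead builds a bespoke fixed-point argument. The key device is the integrating factor $H(t)=t^{\gamma}e^{-t^{-2}/4}$, chosen so that $H'/H=\gamma/t+1/(2t^{3})$ absorbs the singular first-order coefficient and turns the equation into $(H\tilde v_t)'+t^{-2}H(\tilde v^{p}-L^{p-1}\tilde v)=0$. One integration still leaves a $t^{-2}$ inside the integral; the paper removes it by an integration by parts (see \eqref{eq:9}), after which the resulting integral operator is a contraction on a small ball in $C([0,\delta],\mathbb R^2)$ (Lemma~\ref{le:contr}). Analytic dependence on $\ell$ then comes from the uniform contraction theorem, and the rest of your outline (extension to $[r_0/2,\infty)$ by the regular flow of \eqref{eq:w}, differentiation in $\ell$ to get \eqref{eq:lina}, and the normalization $r^{2/(p-1)}u_\ell\to1$) goes through as you describe. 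In short: the missing idea is the weight $H$ and the integration-by-parts step that tame the $t^{-3}$ and $t^{-2}$ singularities; once you supply that, your argument coincides with the paper's.
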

We prepare the proof of this lemma by transforming
the problem to one on a
bounded interval. First, setting
$v(r):=w(r)r^{2/(p-1)}$, we transform equation \eqref{eq:w} to
\eqref{eq:v}. Next, we set $y(\rho)=v(r)$, $\rho=1/r$. A simple
computation shows that $w$ is a solution of \eqref{eq:w} on
$(r_0,\infty)$ for some $r_0>0$
if and only if $y$ is a solution of the following
equation on $(0,1/r_0)$:
\begin{equation}
  \label{eq:y}
  y_{\rho\rho}+\frac{\ga}{\rho}y_\rho+\frac1{2\rho^3}y_\rho+
  \frac1{\rho^2}(y^p-{L}^{p-1}y)=0,
\end{equation}
with $\ga:=3-N+4/(p-1)$. 
Moreover, if conditions \eqref{eq:6} are satisfied by $u=w$, then, as
$\rho\downto 0$, one has  
$y(\rho)\to\ell$ and
\begin{equation*}
  y'(\rho)=-v'(r)r^2=-r^{\frac2{p-1}}
  w(r)r^2\left(\frac2{p-1}r^{-1}+\frac{w'(r)}{w(r)}\right)\to
  0\quad\text{as $r=\frac1\rho\to\infty$.} 
\end{equation*}
So $y$ extends to a $C^1$ function on $[0,1/r_0)$ with
\begin{equation}
  \label{eq:icy}
  y(0)=\ell, \quad y'(0)=0.
\end{equation}
Conversely, if $y$ is $C^1$ on $[0,1/r_0)$ and conditions
\eqref{eq:icy} hold, then $w$ is easily shown to satisfy \eqref{eq:6}.

To show that problem \eqref{eq:y}, \eqref{eq:icy} is well posed, we
write it in a an integral form. Define a function $H$ on $[0,\infty)$
by 
\begin{equation}
  \label{eq:8}
 H(0)=0,\quad  H(\rho)=\rho^\ga e^{-\rho^{-2}/4}\ \text{ if $ \rho>0$,}
\end{equation}
so that $H'(\rho)=\ga H(\rho)/\rho+  H(\rho)/(2\rho^3)$. Notice that
$H'(\rho)>0$ for all sufficiently small $\rho>0$. Equation
\eqref{eq:y} is equivalent to the following equation
\begin{equation}
  \label{eq:7}
 (H(\rho)y'(\rho))'+\rho^{-2}H(\rho)(y^p(\rho)-L^{p-1}y(\rho))=0.  
\end{equation}
Assuming $y$ satisfies \eqref{eq:icy}, we integrate \eqref{eq:7} to
obtain
\begin{align*}
  y'(\rho)&=-\frac1{H(\rho)}\int_0^\rho
              \eta^{-2}H(\eta)(y^p(\eta)-L^{p-1}y(\eta))\,d\eta\\
            &=-\frac2{H(\rho)}\int_0^\rho
              (\eta H(\eta))'-(\ga+1)H(\eta))(y^p(\eta)-L^{p-1}y(\eta))\,d\eta.
\end{align*}
After an integration by parts this becomes
\begin{equation}
  \label{eq:9}
  \begin{aligned}
    y'(\rho)&=-2\rho(y^p(\rho)-L^{p-1}y(\rho))\\
              &~\ \ +\frac{2(\ga+1)}{H(\rho)}\int_0^\rho
            H(\eta)(y^p(\eta)-L^{p-1}y(\eta))\,d\eta\\
            &~\ \  +\frac2{H(\rho)}\int_0^\rho
              (\eta H(\eta))(py^{p-1}(\eta)-L^{p-1})y'(\eta))\,d\eta.
\end{aligned}
\end{equation}
Conversely, noting that $H(\eta)/H(\rho)<1$ if $0<\eta<\rho<\de$ and
$\de>0$ is sufficiently small, one shows easily that if $y\in
C^1[0,\de]$, $y(0)=\ell$ and \eqref{eq:9} holds, then $y'(0)=0$ and
\eqref{eq:y} is satisfied.

We can now set up a suitable fixed point argument.  We work in the 
Banach space $X:=C([0,\de],\R^2)$ with a usual norm, say
$\|U\|=\|y\|_{L^\infty(0,\de)}+\|z\|_{L^\infty(0,\de)}$ for $U=(y,z)\in X$.
Let $U_0\in X$ stand for the constant function $(\ell_0,0)$. Fix any 
$\ep\in (0,\ell_0/2)$ and let $B$ stand for the open 
ball ($\bar B$ for 
the closed ball) in $X$ with center $U_0$ and radius $\ep$. 
Note that the choice of $\ep$ guarantees that
for any $(y,z)\in \bar B$ one has $y\ge \ell_0/2$.
For any $\ell$ sufficiently close to $\ell_0$, we consider
the map $\Psi^\ell:\bar B\to X$ defined by $\Psi^\ell(y,z)=(\tilde y,\tilde
z)$, where, for $\rho\in [0,\de]$, 
\begin{equation}
  \label{eq:10}
\begin{aligned}
  \tilde y(\rho)&=\ell+\int_0^\rho z(\eta)\,d\eta,\\
  \tilde z(\rho)&=-2\rho(y^p(\rho)-L^{p-1}y(\rho))
              +\frac{2(\ga+1)}{H(\rho)}\int_0^\rho
           H(\eta) (y^p(\eta)-L^{p-1}y(\eta))\,d\eta\\
           & ~\qquad \qquad~\qquad \qquad  +\frac2{H(\rho)}\int_0^\rho
              \eta H(\eta)(py^{p-1}(\eta)-L^{p-1})z(\eta))\,d\eta.
\end{aligned}
\end{equation}
Clearly, $y$ is a $C^1[0,\de]$-solution of \eqref{eq:y},
\eqref{eq:icy} if and only if $(y,y')$ is a fixed point of the map
$\Psi^\ell$.

\begin{lemma}
  \label{le:contr}
  If $\de$ and  $\theta$
  are sufficiently small positive numbers, then the map
$\Psi^\ell$ defined above is for each $\ell\in
(\ell_0-\theta,\ell_0+\theta)$ a $1/2$-contraction on $\bar B$.
Denoting its unique fixed point by $U^\ell$, the map $\ell\to U^\ell$
is an analytic $X$-valued map on $(\ell_0-\theta,\ell_0+\theta)$.
\end{lemma}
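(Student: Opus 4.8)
The plan is to verify the contraction property of $\Psi^\ell$ by direct estimates on the three "dangerous" terms in the formula \eqref{eq:10} for $\tilde z$, all of which involve the weight $H$ and the singular factor $\eta^{-2}$ hidden in its logarithmic derivative. The key preliminary observation, already noted before the lemma, is that since $\ga+\tfrac{1}{2\rho^2}\to+\infty$ as $\rho\downto 0$, the function $H$ is strictly increasing on some interval $(0,\delta_0]$; hence for $0<\eta<\rho\le\delta_0$ one has $0<H(\eta)/H(\rho)<1$. This monotone bound is what tames the two integral terms carrying the $1/H(\rho)$ prefactor: in each such term the kernel $H(\eta)/H(\rho)$ is bounded by $1$, and what remains after extracting it is either $\int_0^\rho|y^p-L^{p-1}y|\,d\eta$ or $\int_0^\rho|\eta||py^{p-1}-L^{p-1}|\,|z|\,d\eta$, each of which is $O(\rho)$ on $[0,\delta]$ when $(y,z)\in\bar B$. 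Combined with the explicit factor $2(\ga+1)$ or $2$ and the further factor $\eta\le\delta$ in the last kernel, every term in $\tilde z$ is $O(\delta)$ uniformly on $\bar B$, so that for $\delta$ small $\Psi^\ell$ maps $\bar B$ into itself provided $\ell$ is within $\theta$ of $\ell_0$ with $\theta$ small (the $\tilde y$ component is controlled trivially by $|\ell-\ell_0|+\delta\|z\|_\infty$).

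For the $1/2$-contraction estimate I would subtract: given $(y_1,z_1),(y_2,z_2)\in\bar B$, write $\Psi^\ell(y_1,z_1)-\Psi^\ell(y_2,z_2)=(\tilde y_1-\tilde y_2,\tilde z_1-\tilde z_2)$ and estimate each piece. For $\tilde y_1-\tilde y_2$ one just has $\|\tilde y_1-\tilde y_2\|_\infty\le\delta\|z_1-z_2\|_\infty$. For $\tilde z_1-\tilde z_2$ I would use that on $\bar B$ we have $y_i\ge\ell_0/2$, so the maps $y\mapsto y^p-L^{p-1}y$ and $y\mapsto py^{p-1}-L^{p-1}$ are Lipschitz with a constant depending only on $\ell_0$ and $p$ (here the analyticity of $s\mapsto s^p$ on $[\ell_0/2,\infty)$, away from the origin, is crucial — this is exactly why we restricted the radius $\ep<\ell_0/2$). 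Applying the same $H(\eta)/H(\rho)<1$ bound to the difference of the integral terms, together with $|z_i|$ bounded on $\bar B$, gives $\|\tilde z_1-\tilde z_2\|_\infty\le C\delta\,(\|y_1-y_2\|_\infty+\|z_1-z_2\|_\infty)$ with $C=C(\ell_0,p)$. Choosing $\delta$ small enough that $C\delta\le\tfrac12$ and $\delta\le\tfrac12$ makes $\Psi^\ell$ a $1/2$-contraction on $\bar B$ for every $\ell\in(\ell_0-\theta,\ell_0+\theta)$; the Banach fixed point theorem then produces the unique fixed point $U^\ell$.

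Finally, for the analyticity of $\ell\mapsto U^\ell$ I would invoke the analytic (or uniform) contraction principle: the map $(\ell,U)\mapsto\Psi^\ell(U)$ is, for fixed $U$, an affine — hence analytic — function of $\ell$ (it enters only through the additive constant $\ell$ in the $\tilde y$-component), and it is a uniform $1/2$-contraction in $U$ on $\bar B$; moreover it is jointly continuous, indeed real-analytic, in $(\ell,U)$ because the integral operators in \eqref{eq:10} are analytic in $U$ on the region $y\ge\ell_0/2$ (composition of the analytic nonlinearities with the bounded linear integration operators against the fixed kernels $H(\eta)/H(\rho)$, $\eta H(\eta)/H(\rho)$). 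The standard implicit-function/uniform-contraction argument then yields that the fixed point $U^\ell$ depends analytically on $\ell$ as an $X$-valued map. I expect the only genuinely delicate point to be bookkeeping the $H(\eta)/H(\rho)$ kernels near $\rho=0$ — verifying that they are bounded and that integration against them is a bounded operator on $C[0,\delta]$ uniformly in $\rho$ — but once the monotonicity of $H$ on $(0,\delta_0]$ is in hand this is routine; everything else is a textbook application of the contraction mapping theorem with parameters.
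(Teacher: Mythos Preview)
Your proposal is correct and follows essentially the same route as the paper: both arguments hinge on the monotonicity of $H$ near $0$ (giving $H(\eta)/H(\rho)<1$), the Lipschitz/analytic dependence of $y\mapsto y^p$ and $y\mapsto y^{p-1}z$ on the region $y\ge\ell_0/2$, and the uniform contraction principle for the analyticity in $\ell$. The only organizational difference is that the paper first establishes the $1/2$-contraction and then uses it, via the triangle inequality through the center $U_0$, to reduce the self-mapping condition to the single explicit estimate $\|\Psi^\ell(U_0)-U_0\|\le\theta+C\delta$, whereas you verify self-mapping directly; both are equally valid.
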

Before proving this lemma, we use it to complete the proof of Lemma
\ref{le:shoot2}. 

\begin{proof}[Proof of Lemma \ref{le:shoot2}]
  Lemma \ref{le:contr} and the notes preceding it 
  yield a positive 
  solution of \eqref{eq:v}, \eqref{eq:icy} on some interval
  $[r_1,\infty)$ and also imply the uniqueness of the solution and
  its analytic dependence on $\ell$. 
  Of course, as the equation has no singularity in $(0,\infty)$, we
  can combine these results with standard results from ordinary
  differential equations to prove the existence of an analytic
  function $u$ on  $(r_0/2,\infty)\times (\ell_0-\theta,\ell_0+\theta)$
 (with $\theta$ possibly smaller than in Lemma \ref{le:contr})
 such that statement (i) of Lemma \ref{le:shoot2} holds.

 Having proved that given any $r_0>0$ the function $u(r,\ell)$ is
 defined for $r\in [r_0/2,\infty)$ if $\ell$ is close enough to
 $\ell_0$, we see that $u_\ell(r,\ell_0)$ is defined for any $r\in (0,\infty)$.
 Differentiating the fixed point equation \eqref{eq:10} with respect
 to $\ell$ (using the smooth dependence of the fixed point on $\ell$)
 and reversing the transformations relating $y$ and $w$, we obtain
 that $r^{{2}/{(p-1)}}u_\ell(r,\ell_0)\to 1$ as $r\to\infty$.
 The regularity of the function $u$ allows us to differentiate 
 equation \eqref{eq:w}, with $w=u(\cdot,\ell)$, with respect
 $\ell$ to obtain the equation for $u_\ell(\cdot,\ell_0)$.
 This yields equation \eqref{eq:lina} with
 $\phi=u(\cdot,\ell_0)$. The uniqueness property of the solution
 $u$ implies that $u(\cdot,\ell_0)=w(\codt,\al_0)$
 if $\ell_0=\ell(\al_0)$ for some
$\al_0\in \cS\setminus\{\kappa\}$, and $u(\cdot,\ell_0)=\phi_\infty$ if
$\ell_0=L$. This completes the proof of Lemma \ref{le:shoot2}. 
\end{proof}

\begin{remark}
  \label{rm:higher} {\rm Clearly, we can differentiate equation
  \eqref{eq:w} with $w=u(\cdot,\ell)$ further to find equations for
  higher derivatives of $u(\cdot,\ell)$ with respect to $\ell$. For
  example, $u_{\ell\ell}(\cdot,L)$ is a solution of the following
  nonhomogeneous equation on $(0,\infty)$:
  \begin{equation} \label{eq:29aell}
    z_{rr}+\left(\frac{N-1}r-\frac r2\right) z_r+
    \left(-\frac 1{p-1}+
      p\phi_\infty^{p-1}(r)\right)z
    =-p(p-1)\phi_\infty^{p-2}(r)u_\ell^2(r,L).
  \end{equation} 
Note that the function
$r^{{2}/{(p-1)}}u_{\ell\ell}(r,L)=y_{\ell\ell}(1/r,L)$
stays bounded as $r\to\infty$. This observation will be useful in the
next section.  }
\end{remark}

\begin{proof}[Proof of Lemma \ref{le:contr}]
  As noted above,  $\ep<\ell_0/2$ guarantees that
  for any $(y,z)\in \bar B$ one has $y\ge \ell_0/2$.
  It follows that the maps
  \begin{equation}
    \label{eq:11}
    (y,z)\mapsto y^p, \quad (y,z)\mapsto y^{p-1}z
  \end{equation}
  are analytic $C[0,\de]$-valued maps on $B$.
  Note also that the map sending $u\in C[0,\de]$
  to the function  $\int_0^\rho H(\eta)/H(\rho)u(\eta)\,d\eta$ is a
  bounded linear operator on $C[0,\de]$. It follows that  the map
  $(\ell,U)\mapsto \Psi^\ell(U)$ is an analytic $X$-valued map on
  $(\ell_0-\theta,\ell_0+\theta)\times B$
  (the  smallness of $\theta$, $\de$ is not needed here).

  Choose $\de>0$ so small that $H'>0$ on $(0,\de)$. 
  Clearly, the maps \eqref{eq:11} are globally Lipschitz on $\bar
  B$. This and the relation $H(\eta)/H(\rho)<1$ for $0<\eta<\rho\le
  \de$ imply that, possibly after making $\de>0$ smaller,
  $\Psi^\ell:\bar B\to X$ is a $1/2$-contraction (for any $\ell$).

  We now show that if $\theta$ is sufficiently small and $\de$ is made
  yet smaller, if needed, then for each $\ell\in
  (\ell_0-\theta,\ell_0+\theta)$ one has $\Psi^\ell(\bar B)\subset
  \bar B$, that  is, $\Psi^\ell$ is a $1/2$-contraction on $\bar B$.

  To that aim, for any $U\in \bar B$ we estimate
  \begin{equation}
    \label{eq:12}
     \begin{aligned}
    \|\Psi^\ell(U)-U_0\|&=  \|\Psi^\ell(U_0)-U_0\|+
                          \|\Psi^\ell(U)-\Psi^\ell(U_0)\|\\
    &\le \|\Psi^\ell(U_0)-U_0\|+
      \frac1{2} \|U-U_0\|\\
     & \le \|\Psi^\ell(U_0)-U_0\|+
                         \frac\ep{2}.
  \end{aligned}
\end{equation}
Now
\begin{equation*}
  \Psi^\ell(U_0)(\rho)-U_0=\left(\ell-\ell_0,
   C_0 \big(-2\rho+2(\ga+1)\int_0^\rho\frac{
     H(\eta)}{H(\rho)}\,d\eta\big)\right),  
\end{equation*}
where $C_0=\ell_0^p-L^{p-1}\ell_0$. Clearly,
\begin{equation}
  \label{eq:13}
  \| \Psi^\ell(U_0)-U_0\|\le |\ell-\ell_0|+C\de\le \theta+C\de,
\end{equation}
where $C$ is determined by $C_0$ and $\ga$ (and is independent of
$\theta$ and $\delta$). Taking $0<\theta<\ep/4$ and making $\de>0$
smaller, if necessary, so that $C\de<\ep/4$, we obtain from
\eqref{eq:13}, \eqref{eq:12} that $\| \Psi^\ell(U)-U_0\|<\ep$---that
is, $\Psi^\ell(U)\in \bar B$---for any $U\in \bar B$.

The uniform contraction theorem implies the existence of
a unique fixed point $U^\ell$ of $\Psi^\ell$, and
it also gives the analyticity of the map  $\ell\to
U^\ell:(\ell_0-\theta,\ell_0+\theta)\to X$. 
\end{proof}

Although not needed below, we add a remark on the dependence of the solutions on $p$. Clearly,
when dealing with solutions bounded below by a positive constant, one can view $p$ as a parameter,
with the nonlinearity $w^p$ depending analytically on $p$. Therefore the uniform contraction arguments
employed in the shooting from $0$ and $\infty$ imply that the solutions $w(\cdot,\al)$, $u(\cdot,\ell)$ given by
Lemmas \ref{le:shoot1}, \ref{le:shoot2} depend analytically on $p$, too.

\subsection{The discreteness of the set  $\cS$}
\label{discreteness}
We now show that the set $\cS$ is discrete, hence at most
countable. This will prove  statement (i) of Theorem \ref{thm:main}.

We go by contradiction. Suppose that $\cS$ contains an element
which is not
isolated in $\cS$. Set
\begin{equation}
  \label{eq:14}
  \al_0:=\inf\{\al\in \cS: \al \text{ is an accumulation point of }\cS\}.
\end{equation}
Clearly, $\al_0$ itself is an accumulation point of $\cS$.
By the continuity of the solutions $w(\codt,\al)$
with respect to $\al$, one has $\al_0\in \cS$.  By Lemma
\ref{le:basic}(i),  $\al_0>\kappa$. Set $\ell_0:=\ell(\al_0)$ (cp.
Lemma \ref{le:basic}(ii)).

Choose $\ep>0$ and $\theta>0$ such that the function $w(\cdot,\al)$ is
positive  on $(0,2)$ for all $\al\in (\al_0-\ep,\al_0+\ep)$ and the
function $u(\cdot,\ell)$ is (defined and is) positive on $(1,\infty)$
for all $\ell\in (\ell_0-\theta,\ell_0+\theta)$ 
(see Lemmas  \ref{le:shoot1}, \ref{le:shoot2}).
Recalling from  Section \ref{shoot1} and Lemma \ref{le:shoot2}
that the functions $w_\al(\cdot,\al_0)$, $u_\ell(\cdot,\ell_0)$ are
nontrivial solutions of the linear equation \eqref{eq:lin},
we pick $r_0\in (1,2)$ such that neither of these functions
vanishes at $r_0$. Then, making $\ep>0$ and $\theta>0$ smaller if necessary,
we may assume that
\begin{equation}
  \label{eq:16}
  \begin{aligned}
    w_\al(r_0,\al)&\ne 0\quad (\al\in (\al_0-\ep,\al_0+\ep)),\\
    u_\ell(r_0,\ell)&\ne 0\quad (\ell\in (\ell_0-\theta,\ell_0+\theta)).
  \end{aligned}
\end{equation}

Now, Lemma \ref{le:cont} guarantees that, possibly after 
making $\ep>0$ yet smaller, one has $\ell(\al)\in
(\ell_0-\theta,\ell_0+\theta)$ for any $\al\in
(\al_0-\ep,\al_0+\ep)\cap\cS$. For any such $\al$, Lemmas
\ref{le:basic}(ii) and \ref{shoot2} imply that
$w(\cdot,\al)\equiv u(\codt,\ell(a))$; in particular,
\begin{equation}
  \label{eq:15}
  (w(r_0,\al),w_r(r_0,\al))=(u(r_0,\ell(\al),u_r(r_0,\ell(\al))\quad (\al\in
(\al_0-\ep,\al_0+\ep)\cap\cS).
\end{equation}

Consider the following two
analytic curves
\begin{align*}
  J_1&:=\{(w(r_0,\al),w_r(r_0,\al)):\al\in (\al_0-\ep,\al_0+\ep)\},\\
  J_2&:=\{(u(r_0,\ell),u_r(r_0,\ell)):\ell\in (\ell_0-\theta,\ell_0+\theta)\}.
\end{align*}
In view of \eqref{eq:16}, they
can be reparameterized by the first component, namely,
\begin{align*}
  J_1&:=\{(\zeta,F(\zeta)):\zeta\in I_1\},\\
  J_2&:=\{(\zeta, G(\zeta)):\zeta \in I_2\},
\end{align*}
where $I_1$ is the open interval with the end points
$w(r_0,\al_0\pm\ep)$, $I_2$ is the open interval with the end points
$u(r_0,\ell_0\pm\theta)$, and $F$ and $G$ are  analytic
functions: $F(\zeta)= w_r(r_0,\hat\al(\zeta))$, where $\hat\al$ is the
inverse to $\al\mapsto w(r_0,\al)$; and similarly for $G$.
Since $\ell_0=\ell(\al_0)$, relation \eqref{eq:15} implies that
$w(r_0,\al_0)=u(r_0,\ell_0)=:\zeta_0\in I_1\cap I_2$ and
$F(\zeta_0)-G(\zeta_0)=0$. Further, using  \eqref{eq:15} in
conjunction with the fact $\al_0$ is an accumulation point of $\cS$,
we obtain that $\zeta_0$ is an accumulation point of the set of zeros
of the function $F-G$. By the analyticity, $F-G$ vanishes identically
on a neighborhood of $\zeta_0$. From this and the relation
$w(\cdot,\al) \equiv u(\cdot,\ell(\al))$, we conclude that
for $\al$ in a neighborhood of $\al_0$ the solution $w(\cdot,\al)$ is
positive on $(0,\infty)$, that is, $\al\in \cS$. This is a
contradiction to the definition of $\al_0$ (cp. \eqref{eq:14}).
With this contradiction, the discreteness of $\cS$ and statement
(i) of Theorem \ref{thm:main} are proved.

\section{Solutions near $\phi_\infty$ and
  the proof of Theorem \ref{thm:main}(ii)}
\label{proof2}
In this section we assume  that $p_{JL}<p\le p_{L}$.

Our goal is to show that there
is a constant $\al^*>0$ such that
\begin{equation}
  \label{eq:20}
  \cS\cap (\al^*,\infty)=\emptyset.
\end{equation}
In conjunction with statement (i) of Lemma \ref{le:basic} and
the discreteness of the set $\cS$ proved in the
previous section, \eqref{eq:20} implies that the set $\cS$ is finite.
Thus, once we prove \eqref{eq:20}, the proof of
statement (ii) of Theorem \ref{thm:main}
will be complete.

To prove \eqref{eq:20}, we initially employ the functions $w(r,\al)$,
$u(r,\ell)$ in a very similar manner as in Section \ref{discreteness},
taking $\ell$  close to the constant $L$ from the singular solution
(cp. \eqref{eq:3}). 

First we choose 
$\al^*>0$ and $R>0$ such that
\begin{equation*}
 w_\al(r,\al)>0, \ w(r,\al)>0  \quad(r\in [0,R]),
\end{equation*}
and \eqref{eq:242} holds for some constant $C>0$
(cp. Lemma \ref{le:pos}, \ref{le:large}).
  Next we choose $\theta>0$ such that the 
function $u(\cdot,\ell)$ is (defined and) positive on $(R/2,\infty)$
for all $\ell\in (L-\theta,L+\theta)$.  Pick $r_0\in
(R/2,R)$ such that  $u_\ell(r_0,L)\ne 0$.  Making $\theta>0$ smaller
if necessary, we have
\begin{equation} \label{eq:21}
    u_\ell(r_0,\ell)\ne 0\quad (\ell\in (L-\theta,L+\theta)).
\end{equation}
Further, by Lemma \ref{le:cont}(ii), we have, possibly after 
making $\al^*$ larger, that $\ell(\al)\in
(L-\theta,L+\theta)$ for any $\al\in
(\al^*,\infty)\cap\cS$. For any such $\al$, Lemmas
\ref{le:basic}(ii) and \ref{shoot2} imply that
$w(\cdot,\al)\equiv u(\codt,\ell(a))$; in particular,
\begin{equation}
  \label{eq:22}
  (w(r_0,\al),w_r(r_0,\al))=(u(r_0,\ell(\al),u_r(r_0,\ell(\al))\quad (\al\in
(\al^*,\infty)\cap\cS).
\end{equation}
Consider the following two
analytic curves
\begin{align*}
  J_1&:=\{(w(r_0,\al),w_r(r_0,\al)):\al\in (\al^*,\infty)\},\\
  J_2&:=\{(u(r_0,\ell),u_r(r_0,\ell)):\ell\in (L-\theta,L+\theta)\}.
\end{align*}
In view of the relations $w_\al(r_0,\al)>0$ and \eqref{eq:21}, using
also the fact that $w(r_0,\al)\to\phi_\infty(r_0)=:\zeta_0$ as
$\al\upto\infty$ (cp. Lemma \ref{le:pos}),
 we reparameterize the curves $J_1$, $J_2$ as follows: 
\begin{align}
  J_1&:=\{(\zeta,F(\zeta)):\zeta\in (w(r_0,\al^*), \zeta_0)\},\\
  J_2&:=\{(\zeta, G(\zeta)):\zeta \in I\}.
\end{align}
Here $I$ is the open interval with the end points
$u(r_0,L\pm\theta)$, and $F$ and $G$ are analytic
functions: $F(\zeta)= w_r(r_0,\hat\al(\zeta))$, where $\hat\al$ is the
inverse to $\al\mapsto w(r_0,\al)$; and, similarly,
$G(\ze)= u_r(r_0,\hat\ell(\zeta))$, where $\hat\ell$ is the
inverse to $\ell\mapsto u(r_0,\ell)$.

Since $u(\cdot,L)=\phi_\infty$, we have 
$\zeta_0\in I$ and $G(\zeta_0)=\phi_\infty'(r_0)$. Also,
from the fact that $w_r(r_0,\al)\to \phi_\infty'(r_0)$ as
$\al\to\infty$ (cp. \eqref{eq:5}), we infer that $\lim_{\zeta\upto
  \zeta_0}F(\zeta)= \phi_\infty'(r_0)$. Thus, we may define
$F(\zeta_0):=\phi_\infty'(r_0)$ and $F$ becomes a continuous function
on $(w(r_0,\al^*), \zeta_0]$.

If $F$ were analytic on the interval $(w(r_0,\al^*), \zeta_0]$,
we could use simple analyticity arguments, similar to those in Section
\ref{discreteness}, to conclude the proof of \eqref{eq:20}. However,
it turns out that in some cases $F$
is not even of class $C^2$ at $\zeta_0$, and we thus need a different
reasoning.

We will prove the following statements.
\begin{proposition}
  \label{prop:derivatives} Let $F$ and $G$ be as above.
  Then the function $F$  is of class $C^1$ on
  $(w(r_0,\al^*), \zeta_0]$
  and the following statements hold:
  \begin{itemize}
  \item[(i)] $\la=0$ is 
    an eigenvalue of problem \eqref{eq:17} if and only if
    $F'(\zeta_0)= G'(\zeta_0)$.
  \item[(ii)] If $\la=0$ is
    an eigenvalue of problem \eqref{eq:17}, then 
    $\lim_{\zeta\to\zeta_0}F''(\zeta)$ exists and is distinct from
    $G''(\zeta_0)$. More specifically, the following
    statements are valid (with $\be$ as in \eqref{eq:19}):
    \begin{itemize}
    \item[(a)] If
      \begin{equation}
  \label{eq:43nn}
  N-1+3\be-\frac{2(p-2)}{(p-1)}\le-1,
     \end{equation}
then 
    \begin{equation}
      \label{eq:23nn}
      F''(\zeta)\to-\infty\quad\text{as $\zeta\upto \zeta_0$}.
    \end{equation}
 \item[(b)] If \eqref{eq:43nn} is not true, then  $F''(\zeta)$ has a
   finite limit as $\zeta\to\zeta_0$ and
   \begin{equation}
     \label{eq:49}
     \lim_{\zeta\to\zeta_0}F''(\zeta)\ne G''(\zeta_0).
   \end{equation}    
  \end{itemize}
    \end{itemize}
\end{proposition}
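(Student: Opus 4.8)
\textbf{Proof strategy for Proposition \ref{prop:derivatives}.}

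The plan is to compute the derivatives of $F$ and $G$ at $\zeta_0$ by differentiating the shooting data with respect to the shooting parameters and inverting the reparametrization. Write $\al=\hat\al(\zeta)$ for the inverse of $\al\mapsto w(r_0,\al)$ and $\ell=\hat\ell(\zeta)$ for the inverse of $\ell\mapsto u(r_0,\ell)$. Since $F(\zeta)=w_r(r_0,\hat\al(\zeta))$ and $G(\zeta)=u_r(r_0,\hat\ell(\zeta))$, the chain rule gives
\begin{equation*}
  F'(\zeta)=\frac{w_{\al r}(r_0,\al)}{w_\al(r_0,\al)},\qquad
  G'(\zeta)=\frac{u_{\ell r}(r_0,\ell)}{u_\ell(r_0,\ell)}.
\end{equation*}
For $G$, everything is analytic up to and including $\ell=L$ (Lemma \ref{le:shoot2}), so $G\in C^\infty$ near $\zeta_0$ and $G'(\zeta_0),G''(\zeta_0)$ are obtained by the usual (routine) formulas involving $u_\ell(\cdot,L)$, $u_{\ell\ell}(\cdot,L)$, solutions of \eqref{eq:lina} and \eqref{eq:29aell} with $\phi=\phi_\infty$. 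The delicate half is $F$ as $\al\to\infty$, equivalently $\zeta\upto\zeta_0=\phi_\infty(r_0)$.

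The key analytic input is the behavior of $w_\al(\cdot,\al)$ and $w(\cdot,\al)-\phi_\infty(\cdot)$ on $[0,R]$ as $\al\to\infty$. First I would establish, using Lemma \ref{le:pos} and a Sturmian comparison with the solutions $\psi_1,\psi_2$ of \eqref{eq:170} from Lemma \ref{le:anal}, that after normalizing by $w_\al(r_0,\al)$ the functions $\tilde\psi(\cdot,\al):=w_\al(\cdot,\al)/w_\al(r_0,\al)$ converge, uniformly on $[\delta,R]$ for any $\delta>0$, to the unique (up to scalar) solution of \eqref{eq:170} that is $o(r^{\be^-})$ as $r\to0$, i.e. to a multiple of $\psi_1$; this uses \eqref{eq:242} to rule out the $\psi_2$-component. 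Likewise $w(\cdot,\al)-\phi_\infty(\cdot)$, suitably normalized, converges to a multiple of $\psi_1$. The quotient $w_{\al r}/w_\al$ at $r_0$ then converges to $\psi_1'(r_0)/\psi_1(r_0)$, giving $F'(\zeta_0)=\psi_1'(r_0)/\psi_1(r_0)$. Part (i) then follows: $F'(\zeta_0)=G'(\zeta_0)$ says that $\psi_1$ and $u_\ell(\cdot,L)$ have the same logarithmic derivative at $r_0$, hence (both solving \eqref{eq:170}/\eqref{eq:lina} with $\phi=\phi_\infty$, and $u_\ell(\cdot,L)$ behaving like $r^{-2/(p-1)}$ at infinity by Lemma \ref{le:shoot2}) $\psi_1$ is a scalar multiple of $u_\ell(\cdot,L)$, which by Lemma \ref{le:anal} (equation \eqref{eq:31}) is exactly the condition that $\la=0$ be an eigenvalue of \eqref{eq:17}.

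For part (ii), assume $\la=0$ is an eigenvalue, so $\psi_1=c\,u_\ell(\cdot,L)$ satisfies \eqref{eq:31}. To get the second derivative I would expand $w(\cdot,\al)$ to second order in the small parameter: writing $w(\cdot,\al)=\phi_\infty+\mu(\al)\psi_1+\mu(\al)^2\chi+\text{(higher order, with errors)}$ where $\mu(\al)\to0$ characterizes the deviation and $\chi$ solves the inhomogeneous equation \eqref{eq:29aell} (with $\phi_\infty$), one relates $F''(\zeta)$ to the logarithmic-derivative quotient of this expansion at $r_0$, and $G''(\zeta_0)$ to the analogous quotient for $u(\cdot,\ell)=\phi_\infty+(\ell-L)u_\ell(\cdot,L)+\tfrac12(\ell-L)^2u_{\ell\ell}(\cdot,L)+\cdots$. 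The difference $F''-G''$ at $\zeta_0$ is then governed by the difference between the second-order correction term $\chi$ in the $w$-expansion and $u_{\ell\ell}(\cdot,L)$ — both solve \eqref{eq:29aell}, but with different behavior as $r\to\infty$ and (crucially) as $r\to0$. The homogeneous solutions of \eqref{eq:29aell} are spanned by $\psi_1$ and $\psi_2$; $u_{\ell\ell}(\cdot,L)$ is the solution whose rescaling $r^{2/(p-1)}u_{\ell\ell}(r,L)$ stays bounded at infinity (Remark \ref{rm:higher}), whereas the correction $\chi$ coming from the shooting-from-$0$ side is forced, via Lemma \ref{le:pos}, to have the milder singularity $O(r^{\be})$ rather than $O(r^{\be^-})$ at $r=0$. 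These two normalizations are generically incompatible, so $\chi-u_{\ell\ell}(\cdot,L)$ has a nonzero $\psi_2$-component; this is what produces $\lim F''\ne G''$. The dichotomy (a)/(b) comes from examining the particular solution of \eqref{eq:29aell}: the right-hand side is $-p(p-1)\phi_\infty^{p-2}u_\ell^2\sim r^{-2(p-2)/(p-1)}\psi_1^2\sim r^{2\be-2(p-2)/(p-1)}$ near $0$, and when convolved against the Green's function built from $\psi_1,\psi_2$ the resulting particular solution behaves like $r^{\be}$ times a logarithm or a power of $r$; the exponent $N-1+3\be-2(p-2)/(p-1)$ in \eqref{eq:43nn} is precisely the threshold at which the integral defining the coefficient of $\psi_1$ (or the size of $F''$) diverges, forcing $F''\to-\infty$, versus converging to a finite value in case (b).

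\textbf{Main obstacle.} The hard part is the careful second-order asymptotic analysis on $[0,R]$: tracking the error terms in the expansion $w(\cdot,\al)=\phi_\infty+\mu\psi_1+\mu^2\chi+\cdots$ uniformly down to $r=0$, where analyticity of the nonlinearity is lost and the equation is singular, and extracting the exact coefficient (with its sign) of the $\psi_2$-mode in $\chi-u_{\ell\ell}(\cdot,L)$. This requires the sharp pointwise bounds of Lemma \ref{le:pos} — especially \eqref{eq:242} — to control the $r\to0$ behavior, plus a delicate variation-of-parameters computation using the Wronskian of $\psi_1,\psi_2$ and the weight $\om$; it is also here that the threshold condition \eqref{eq:43nn} must be identified by a convergence/divergence analysis of the relevant integral. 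The reparametrization (dividing by $w_\al(r_0,\al)$, which itself tends to $0$) must be handled so that the $0/0$ behavior in $F''(\zeta)=\frac{d}{d\zeta}\bigl(w_{\al r}/w_\al\bigr)$ is resolved correctly; getting the constants right so that \eqref{eq:49} is a strict inequality, not merely $\le$, is the crux.
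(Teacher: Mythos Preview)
Your outline for part~(i) and for~(ii)(a) matches the paper's argument closely. For~(i) the paper normalizes $\psi(\cdot,\al):=w_\al(\cdot,\al)/w_\al(r_0,\al)$, uses \eqref{eq:242} to pass to the limit $\psi_\infty$ (a multiple of $\psi_1$), and observes that $F'(\zeta_0)=G'(\zeta_0)$ forces $\psi_\infty\equiv u_\ell(\cdot,L)/u_\ell(r_0,L)$, which by Lemma~\ref{le:anal} is the eigenvalue condition. For~(ii)(a) the paper is more direct than your Green's-function sketch: via variation of constants it obtains the closed formula
\[
F''(\zeta)=-\frac{p(p-1)}{\omega(r_0)}\int_0^{r_0}s^{N-1}e^{-s^2/4}\,w(s,\al)^{p-2}\,\psi(s,\al)^3\,ds,
\]
notes that the integrand is strictly positive on $[0,r_0]$, and applies Fatou's lemma; the exponent in \eqref{eq:43nn} is exactly the integrability threshold for the limiting integrand $s^{N-1-2(p-2)/(p-1)}\psi_\infty^3(s)$ near $s=0$.

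The genuine gap is~(ii)(b). Your assertion that the $r\to0$ behavior of $\chi$ and the $r\to\infty$ behavior of $u_{\ell\ell}(\cdot,L)$ are ``generically incompatible'' is precisely the statement to be proved, and it is \emph{not} generic: it depends on the specific value $p=p_j$ at which $\la_j=0$. (Also, Lemma~\ref{le:pos} controls only $w_\al$, not the second $\al$-derivative, so your $O(r^\be)$ claim for $\chi$ is not justified as stated.) In the paper, equality $\lim F''=G''(\zeta_0)$ is shown to force the Fredholm-type condition
\[
\int_0^\infty s^{N-1-\frac{2(p-2)}{p-1}}e^{-s^2/4}\,\psi_\infty^3(s)\,ds=0,
\]
and then the explicit representation $\psi_\infty(r)=c\,r^\be M(-j,\be+\tfrac{N}{2},r^2/4)$ is invoked. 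Since $j$ is an integer this Kummer function is a generalized Laguerre polynomial, and after the substitution $z=r^2/4$ the condition becomes $\int_0^\infty z^{B-1}e^{-z}L(j,B+j,z)^3\,dz=0$ with $B=\be+\tfrac{N}{2}-j-1$; the positivity $B>0$ is equivalent to the failure of \eqref{eq:43nn}. Ruling this out requires a separate combinatorial result (the paper's appendix, Proposition~\ref{prop-Lag}) showing that $\Ga(B)^{-1}\int_0^\infty z^{B-1}e^{-z}L(j,B+j,z)^3\,dz$ is a polynomial in $B$ with strictly positive coefficients for every $j\ge 2$. Nothing in your outline supplies this special-function ingredient, and without it there is no mechanism to exclude the degenerate case; this Laguerre-integral positivity, not the bookkeeping of the $0/0$ limits, is the crux you are missing.
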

\begin{remark}
  \label{rm:pj}{\rm
    \begin{itemize}
    \item[(i)] 
      It may be instructive---and will be useful below---to
      list the exponents $p>p_{JL}$  for which $\la=0$ is
    an eigenvalue of problem \eqref{eq:17}. These can be computed from
    \eqref{eq:18}, \eqref{eq:19}: assuming $N>10$,
    for $j\ge 2$ we have $\la_j=0$ if and only if
    $p=p_j$, where
    \begin{equation}
      \label{eq:46}
      p_j:=1+\frac{4j-2}{N(j-1)-2j^2-2j+2}.
    \end{equation}
   As already mentioned in Lemma \ref{le:HV},
   $\la_2=0$ for $p=p_L$ (in other words, $p_2=p_L$),
   and $\la_2<0$ for any $p_{JL}<p<p_L$. 
   To have $p_j> p_{JL}$ for some $j\ge 3$, $N$ has to be sufficiently
   large. Specifically,  $p_j> p_{JL}$ if and only if $N>
   (2j-1)^2+1$. Thus, for  example, 
   if $N\le 26$, then $\la=0$ is not
   an eigenvalue of problem \eqref{eq:17}
   for any $p\in (p_{JL},p_L)$;
   if $26<N\le 50$, it is an eigenvalue for exactly one
   $p\in (p_{JL},p_L)$, namely $p=p_3$; and so on.
 \item[(ii)] Assume $p\in (p_{JL},p_L)$. 
   As noted in the previous remark,
   the assumption
   of statement (ii) of Proposition \ref{prop:derivatives} ($\la=0$ being an
   eigenvalue of \eqref{eq:17}) 
    is void if $N\le 26$. 
    Also, if  $26<N\le 50$ and the assumption is satisfied,
   then necessarily  $p= p_3$ (and $j=3$).  In this case, 
   condition \eqref{eq:43nn} is automatically satisfied.
   This follows from the
   relations  \eqref{eq:46} and $\be=-{2}/{(p-1)}-6$
   (cp. \eqref{eq:18}).
   However, for larger dimensions, \eqref{eq:43nn} is not always
   satisfied. For example, in the case of $p=p_3$ (when $\la_3=0$), 
   \eqref{eq:43nn} is not satisfied if 
   $N> 56$. 
    \end{itemize}
    }
\end{remark}

Before proving Proposition \ref{prop:derivatives} , we show how it implies \eqref{eq:20}.
\begin{proof}[Proof of \eqref{eq:20}]
  Recall that the function $G$ is analytic in a neighborhood of
  $\zeta_0$. 
  Proposition \ref{prop:derivatives} implies that
  either $F'(\zeta_0)\ne G'(\ze_0)$ or there exists $\zeta_1<\zeta_0$ such that
  $F''(\zeta)\ne G''(\zeta)$ for all $\zeta\in
  (\zeta_1,\zeta_0)$. In either case, $\zeta_0$ is clearly not an
  accumulation point of the set of zeros of the function
   $F-G$. This is equivalent to \eqref{eq:20}.
\end{proof}
\begin{remark}
  \label{rm:ketc}{\rm
There is a strong indication (see Remark \ref{rm:last} below)
   that whenever $\la=0$ is 
    an eigenvalue of problem \eqref{eq:17}, then there is an integer
    $k\ge 2$ such that
    \begin{equation}
      \label{eq:23}
      |F^{(k)}(\zeta)|\to\infty\quad\text{as $\zeta\upto \zeta_0$}.
    \end{equation}
    If confirmed, this could be used---instead of statement
    (ii)(b) of Proposition \ref{prop:derivatives}---as an alternative 
    proof of \eqref{eq:20} (the arguments would be
    similar as with $k=2$ in the case (ii)(a)). 
    }
\end{remark}

The rest of the section devoted to the proof of Proposition
\ref{prop:derivatives}. 
We carry out the proof in several steps. In some cases, we do the
computations in greater generality than needed for the proof, as these
may be of some interest and do not require much extra work.  

\medskip
\noindent
\emph{STEP 1: {Computation of the derivatives $F^{(k)}(\zeta)$,
    $G^{(k)}(\zeta)$.}}
\smallskip

Recall that, assuming $\al$ is sufficiently large, we have
$F(\zeta)= w_r(r_0,\hat\al(\zeta))$, where $\hat\al$ is the
inverse to $\al\mapsto w(r_0,\al)$. Therefore, 
we have
\begin{equation}
  \label{eq:25}
  F'(\zeta)=\frac{w_{r\al}(r_0,\al)}{w_{\al}(r_0,\al)},\quad\text{with }
  \al=\hat\al(\zeta).\
\end{equation}
Similarly, for any integer $k>1$,
if $F^{(k)}(\zeta)=:g(\al)$ for $\al=\hat\al(\zeta)$, then
\begin{equation*}
  F^{(k+1)}(\zeta)=\frac1{w_{\al}(r_0,\al)}\partial_\al g(\al).
\end{equation*}
Hence, by induction, for $k=1,2,\dots$ we have
\begin{equation}
  \label{eq:261}
  F^{(k)}(\zeta)=(\hat \partial_\al)^kw_r(r_0,\al),\quad
  \text{with } \al=\hat\al(\zeta),
\end{equation}
where $\hat \partial_\al$ is a differential operator given by
\begin{equation*}
 \hat \partial_\al:= \frac1{{w_{\al}(r_0,\al)}}{\partial_\al}.
\end{equation*}
Set
\begin{equation}
  \label{eq:26}
  \psi(r,\al):=\hat\partial_\al w(r,\al)=\frac{w_{\al}(r,\al)}{w_{\al}(r_0,\al)}.
\end{equation}
Note that $\psi(\codt,\al)$ is a
solution of
the following problem with a homogeneous differential equation:
 \begin{align}  \label{eq:28a}
     z_{rr}+\left(\frac{N-1}r-\frac r2\right) z_r+\left(-\frac 1{p-1}+
       p(w(r,\al))^{p-1}\right)z&=0,\quad r\in (0,R],\\
     z(r_0)=1, \quad \text{$z(r)$ is bounded as $r\downto 0$.}  \label{eq:28b}
 \end{align}
Similarly, for $k= 2,3, \dots$, $\hat\partial_\al^kw(r,\al)$
is a solution of the following problem 
with a nonhomogeneous differential equation:
\begin{align} \label{eq:29a}
     z_{rr}+\left(\frac{N-1}r-\frac r2\right) z_r+&\left(-\frac 1{p-1}+
       p(w(r,\al))^{p-1}\right)z=f_k(r,\al),\ \, r\in (0,R],\\
     z(r_0)&=0, \quad \text{$z(r)$ is bounded as }r\downto 0, \label{eq:29b}
  \end{align}
where
\begin{equation}
  \label{eq:30}
  f_k(r,\al)=p(w(r,\al))^{p-1}\hat\partial_\al^kw(r,\al)-\hat\partial_\al^kw^p(r,\al).
\end{equation}   
We remark that \eqref{eq:30} is just a compact way of writing the
right-hand side. The $k$-derivative actually cancels out in
\eqref{eq:30} so $f_k$ depends on lower derivatives only. Obviously,
for each fixed $\al$, the function $f_k(r,\al)$ is bounded as
$r\to0$.

For the function $G$ and $k=1,2\dots$, we have similarly as
for $F$ in \eqref{eq:25},\eqref{eq:261},
\begin{equation}
  \label{eq:53}
   G^{(k)}(\zeta)=(\tilde\partial_\ell)^ku_{r}(r_0,\ell),\quad\text{with
   }\ell=\hat\ell(\zeta),   
\end{equation}
where
\begin{equation}
  \label{eq:54}
  \tilde\partial_\ell:=\frac1{u_\ell(r_0,\ell)}\partial_\ell.
\end{equation}

\medskip
\noindent
\emph{STEP 2: Relation of $F'(\zeta_0)$ to $G'(\zeta_0)$ and the proof of statement
  (i) of Proposition \ref{prop:derivatives}.   }
\smallskip

We find the (left) derivative $F'(\zeta_0)$
using the definition of $F$ and the L'Hospital rule:
  \begin{equation}
    \label{eq:34}
    \begin{aligned}
  \lim_{\zeta\upto\zeta_0}\frac{F(\zeta)-F(\zeta_0)}{\zeta-\ze_0}&=
  \lim_{\al\to\infty}
  \frac{w_r(r_0,\al)-\phi_\infty'(r_0)}{w(r_0,\al)-\phi_\infty(r_0)}\\
  &=      
  \lim_{\al\to\infty}\frac{w_{\al r}(r_0,\al)}{w_\al(r_0,\al)}=
  \lim_{\al\to\infty}\psi_r(r_0,\al),
\end{aligned}
\end{equation}
where $\psi$ is as in \eqref{eq:26}. Using Lemma \ref{le:large},
the uniform bound \eqref{eq:242}, and  regularity properties of solutions
of linear differential equations,
one shows easily
that, as $\al\to\infty$,
\begin{equation}\label{eq:39}
\begin{aligned}
  \psi(\cdot,\al)&\to \psi_\infty,\\
  \psi_r(\cdot,\al)&\to \psi_\infty', 
\end{aligned}
\end{equation}
with the convergence in $L_{loc}^\infty(0,\infty)$, where
$\psi_\infty$ is a positive
solution of equation \eqref{eq:170} satisfying the following relations for
some positive constant $C_1$:
\begin{equation}
  \label{eq:33}
\psi_\infty(r_0)=1,\qquad\psi_\infty(r) \le C_1r^{\be}\quad (r\in
      [0,r_0]) 
    \end{equation}
    ($\be$ is as in \eqref{eq:19}).
The solution $\psi_\infty$ is uniquely determined. In fact, it follows
follows from \eqref{eq:33} that
\begin{equation}
  \label{eq:38}
  \psi_\infty\equiv c_0\psi_1\ \text{ with $c_0:=\frac1{\psi_1(r_0)}$,}
\end{equation}
where $\psi_1$ is as in Lemma  \ref{le:anal}.

Thus, the limit in
\eqref{eq:34} exists and we have   
$F'(\zeta_0)=\psi_\infty'(r_0)$.      
Now,  by \eqref{eq:261}, \eqref{eq:26}, we also have 
\begin{equation}
  \label{eq:35}
  \lim_{\zeta\upto\zeta_0}F'(\zeta)=
  \lim_{\al\to\infty}\psi_r(r_0,\al)=\psi_\infty'(r_0),
\end{equation}
showing that $F$ is of class $C^1$ on $(w(r_0,\al^*), \zeta_0]$.

The derivate $G'(\zeta_0)$ is obtained directly from \eqref{eq:53},
\eqref{eq:54} using  the relations $u(r_0,L)=\phi_\infty(r_0)=\zeta_0$:
\begin{equation*}
  G'(\zeta_0)=\frac{u_{r\ell}(r_0,L)}{u_{\ell}(r_0,L)}.
\end{equation*}
By Lemma \ref{le:shoot2},
$\hat\psi(r):={u_{\ell}(r,L)}/{u_{\ell}(r_0,L)}$ is a solution of
\eqref{eq:170} satisfying
$\hat\psi(r_0)=1$ and
\begin{equation}
  \label{eq:24}
  \text{$r^{{2}/{(p-1)}}\hat\psi(r)\to \frac1{u_{\ell}(r_0,L)}$ \ as
$r\to\infty$.}
\end{equation}

We now complete the proof of statement (i) of Theorem \ref{thm:main}.
The relation $F'(\ze_0)=G'(\zeta_0)$ is equivalent to
$\psi_\infty'(r_0)=\hat \psi'(r_0)$. Since also 
$\psi_\infty(r_0)=1=\hat \psi(r_0)$ and $\psi_\infty$, $\hat\psi$ are
solutions of \eqref{eq:170}, the relation $F'(\ze_0)=G'(\zeta_0)$
is actually equivalent to the identity
$\hat \psi \equiv \psi_\infty $.
In view of 
\eqref{eq:38} and \eqref{eq:24},
the identity means that 
the solution
$\psi_1$ in Lemma \ref{le:anal} satisfies \eqref{eq:31}.
By Lemma \ref{le:anal}, this is equivalent to 
$\la=0$ being an eigenvalue of \eqref{eq:17}.
Statement (i) is proved.

\medskip
\noindent
\emph{STEP 3: Variation of constants and an integral formula for
$F^{(k)}(\zeta)$.}
\smallskip

We find a tangible formula
for the functions $F^{(k)}(\zeta)$, $k=2,3,\dots$. 
For a while, we will consider $\al>\al^*$ fixed and  write $\psi$ for
$\psi(\codt,\al)$, $f_k$ for $f_k(\cdot,\al)$. Remember that
$\psi$ is a solution of
\eqref{eq:28a}, \eqref{eq:28b}.
Let $\varphi$ be the solution of 
\eqref{eq:28a} with
\begin{equation}
  \label{eq:32}
  \varphi(r_0)=0,\quad \varphi'(r_0)=-\frac{1}{\omega(r_0)},
\end{equation}
where $\omega$ is defined in \eqref{eq:anew}.

Obviously, $\psi$, $\varphi$ are linearly independent. We claim that
for some constant $c\ne 0$ one has
\begin{equation}
  \label{eq:36}
  \varphi(r)=cr^{-(N-2)}+o(r^{-(N-2)})\ \text{ as $r\to0$.}
\end{equation}
In fact, any solution $\varphi$ 
linearly independent from $\psi$ has this property.
One way to see this is by using the Frobenius method.
Observe that multiplying equation \eqref{eq:28a} by $r^2$,
we obtain an equation with analytic coefficients (near $r=0$) and a
regular singular point at $r=0$. We look for solutions in the form of
a convergent Frobenius series
\begin{equation}
  \label{eq:37}
  z(r)=r^\vartheta\sum_{j=0}^\infty c_jr^j,
\end{equation}
where $c_j$ are real coefficients, $c_0\ne0$,
and $\vartheta$ is a root of the 
indicial equation
\begin{equation*}
  \vartheta(\vartheta-1)+(N-1)\vartheta=0.
\end{equation*}
The larger root $\vartheta=0$ always yields solutions of the form
\eqref{eq:37}; such solutions are bounded near $0$ and
they are all scalar multiples of $\psi$. Now, since the smaller root,
$\vartheta :=-(N-2)$, is also an integer, 
the linearly independent solution $\varphi$ is
either given by \eqref{eq:37} (with $\vartheta =-(N-2)$), or by the
formula
\begin{equation}
  \label{eq:37nn}
  \varphi(r)= C\psi(r)\log r+z(r)
\end{equation}
where $z$ is as in \eqref{eq:37} with $c_0\ne0$ and $C\in \R$
(possibly $C=0$), see \cite[Theorem~4.5]{Teschl}, for example.  
In either case, \eqref{eq:36} holds.

We use the linearly independent solutions $\psi$, $\varphi$ in 
the variation of constants formula. 
The homogeneous equation \eqref{eq:28a} can be written as
\begin{equation*}
  (\omega(r)z_r)_r+ \omega(r)\left(-\frac 1{p-1}+
       pw^{p-1}(r,\al)\right)z=0. 
\end{equation*}
The Wronskian of the solutions $\psi$, $\varphi$, that is, the function
\begin{equation*}
  W(r):=\psi'(r)\varphi(r)-\psi(r)\varphi'(r),
\end{equation*}
satisfies
$ (\omega(r)W(r))'=0$ for all $r>0$ (as long as $w(\cdot,\al)$ stays positive)
and $\omega(r_0)W(r_0)=1$ (cp. \eqref{eq:28b},
\eqref{eq:32}). So $W(r)=1/\omega(r)$ for all $r>0$. A
standard variation of constants formula (easily verified by 
direct differentiation) yields the general solution
of \eqref{eq:29a}:
    \begin{multline}
      \label{eq:42}
        z(r)=\left(c_1-\int_r^{r_0}\omega(s)f_k(s)\varphi(s)\,ds\right)\psi(r)
\\        +
  \left(c_2+\int_r^{r_0}\omega(s)f_k(s)\psi(s)\,ds\right)\varphi(r).
\end{multline}
Here $c_1, c_2\in\R$ are arbitrary parameters. 
For \eqref{eq:42} to give a solution with $z(r_0)=0$,  it is necessary
and sufficient that $c_1=0$. For this solution to also be bounded
as $r\to0$, it is necessary that
\begin{equation*}
  c_2+\int_0^{r_0}\omega(s)f_k(s)\psi(s)\,ds=0.
\end{equation*}
This follows from the boundedness of $\psi$, $f_k$, and formulas
\eqref{eq:36}, \eqref{eq:anew}. Thus, we get 
\begin{equation}
      \label{eq:42n}
        z(r)=-\psi(r) \int_r^{r_0}\omega(s)f_k(s)\varphi(s)\,ds  -
  \varphi(r)\int_0^{r}\omega(s)f_k(s)\psi(s)\,ds,
\end{equation}
showing in particular that the solution of \eqref{eq:29a}, \eqref{eq:29b}
is unique.  Using 
\eqref{eq:36}, \eqref{eq:anew},
one verifies easily that the function $z$ given
by \eqref{eq:42n} is
bounded near $r=0$, so it is the unique
solution of \eqref{eq:29a}, \eqref{eq:29b}.   

Differentiating  \eqref{eq:42n} and using \eqref{eq:32}, we obtain
\begin{equation}
  \label{eq:44}
  z'(r_0)=\frac1{\omega(r_0)}\int_0^{r_0}\omega(s)f_k(s)\psi(s)\,ds.
\end{equation}

We now summarize the above computations, bringing back the $\al$-variable.
Using \eqref{eq:261}, \eqref{eq:29a}, \eqref{eq:29b}, and substituting from
\eqref{eq:anew}, we obtain
that for $k=2,3,\dots$ 
\begin{equation}
  \label{eq:28}
  F^{(k)}(\zeta)=\frac1{\omega(r_0)}\int_0^{r_0}s^{N-1}e^{-{s^2}/{4}}f_k(s,\al)\psi(s,\al)\,ds,
\quad \text{with } \al=\hat\al(\zeta),
\end{equation}
where
\begin{equation}
  \label{eq:30n}
  f_k(r,\al)=pw^{p-1}(r,\al)\hat\partial_\al^kw(r,\al)-\hat\partial_\al^kw^p(r,\al).
\end{equation} 

\medskip
\noindent
\emph{STEP 4: Estimates of  $F''(\zeta)$ 
  as $\zeta\upto \zeta_0$ and the proof of statement (ii )(a)
  of Proposition \ref{prop:derivatives}. }  
\medskip

 For $k=2$, formulas \eqref{eq:28}, \eqref{eq:30n}, \eqref{eq:26} give
 \begin{equation}
  \label{eq:282}
  F^{''}(\zeta)=-\frac{p(p-1)}{\omega(r_0)}\int_0^{r_0}s^{N-1}e^{-{s^2}/{4}}
  (w(s,\al))^{p-2}\psi^3(s,\al)\,ds\quad(\al=\hat\al(\zeta)). 
\end{equation}
Recall that $\psi^3(\cdot,\al)>0$ on $[0,R)\superset [0,r_0]$ for all
$\al>\al^*$.

We have $\al\to\infty$ as $\zeta\to\zeta_0$.
Also, for any $s\in (0,r_0]$, 
\begin{equation}
\label{eq:29}
\begin{aligned}  
 \lim_{\al\to\infty}
  (w(s,\al))^{p-2}\psi^3(s,\al)&=(\phi_\infty(s))^{p-2}(\psi_\infty(s))^3\\
  &=
 L^{p-2}s^{-2(p-2)/(p-1)}  (\psi_\infty(s))^3
\end{aligned}
\end{equation}
(see Step 1). Now, by \eqref{eq:38}, $\psi_\infty$ is a nonzero scalar
multiple of the
solution $\psi_1$ in Lemma \ref{le:anal}. Moreover, being the limit of 
$\psi(\cdot,\al)$, $\psi_\infty$ is nonnegative (hence positive) in
$(0,r_0]$. Therefore, by \eqref{eq:30a}, there is a
positive constant $c_1$ such that
\begin{equation}
  \label{eq:40}
 c_1s^{3\beta}\le (\psi_\infty(s))^3\le c_1^{-1}s^{3\beta} \quad (s\in (0,r_0)).
\end{equation}

Assume now that \eqref{eq:43nn} is true. 
It follows from \eqref{eq:282}--\eqref{eq:40} and  Fatou's lemma that
for some positive constant $c_2$ one has
\begin{equation}
  \label{eq:41}
  \limsup_{\zeta\upto\zeta_0} F''(\zeta)\le -c_2\int_0^{r_0}
  s^{N-1+3\be-2(p-2)/(p-1)}\,ds. 
\end{equation}
By \eqref{eq:43nn}, the integral is infinite, hence \eqref{eq:49} holds.
With this we have completed the proof statement (ii)(a) of Proposition
\ref{prop:derivatives}.

Returning to \eqref{eq:282}, we now find the limit of $F''(\zeta)$
assuming \eqref{eq:43nn} is not true, that is,  
 \begin{equation}
  \label{eq:43nnot}
 \ga:= N-1+3\be-\frac{2(p-2)}{(p-1)}>-1.
\end{equation}
Using the relations \eqref{eq:242} and $w(r,\al)<\phi_\infty(r)$, we
find an upper bound on the  integrand in  \eqref{eq:282} in the form
$c s^\ga$, where $c>0$ is a constant. By 
\eqref{eq:43nnot}, this is an integrable function, hence 
\eqref{eq:29} and the Lebesgue dominated convergence theorem 
yield the finite limit
\begin{equation}
  \label{eq:43}
  \lim_{\zeta\upto\zeta_0} F''(\zeta)=-\frac{p(p-1) L^{p-2}}{\omega(r_0)}
  \int_0^{r_0}s^{N-1-2(p-2)/(p-1)}e^{-{s^2}/{4}}(\psi_\infty(s))^3
  \,ds.
\end{equation}

\medskip
\noindent
\emph{STEP 5: Fredholm alternative and  the proof of statement (ii)(b)
  of Proposition \ref{prop:derivatives}. }  
\medskip

We assume that $\la=0$ is an eigenvalue of \eqref{eq:17} (that is,
$F'(\zeta_0)=G'(\zeta_0)$) and \eqref{eq:43nnot} holds.
Thus the limit $\lim_{\zeta\upto\zeta_0} F''(\zeta)$ is given
by \eqref{eq:43} and it is finite. 
We  claim
 that for the relation 
\begin{equation}
  \label{eq:50}
  \lim_{\zeta\upto\zeta_0} F''(\zeta)=G''(\zeta_0)
\end{equation}
to hold it is necessary that
\begin{equation}
  \label{eq:45}
   \int_0^{\infty}s^{N-1-2(p-2)/(p-1)}e^{-{s^2}/{4}}\psi^3_\infty(s)
  \,ds=0.
\end{equation}

We first prove this claim and then verify that \eqref{eq:45} does not
hold. This will prove statement (ii)(b) and complete the proof 
  of Proposition \ref{prop:derivatives}.

To prove the claim, assume \eqref{eq:50}. According to
\eqref{eq:261},  the finite limit in \eqref{eq:50} is also the limit
of $(\hat\partial_\al)^2w_r(r_0,\al)$ as $\al\to\infty$. Therefore,
using the condition $(\hat\partial_\al)^2w(r_0,\al)=0$ (cp.
\eqref{eq:29b}) and taking the limit in equation \eqref{eq:29a} with
$k=2$, we infer that, as $\al\to\infty$,
\begin{equation*}
  z_2(\cdot,\al):=(\hat\partial_\al)^2w(\cdot,\al)\to
  z_2^\infty\quad\text{in }C^1_{loc}(0,\infty)
\end{equation*}
where $z_2^\infty$ is the solution of the initial value
problem
\begin{align} \label{eq:29ann}
     z_{rr}+\left(\frac{N-1}r-\frac r2\right) z_r+&\left(-\frac 1{p-1}+
    p\phi_\infty^{p-1}(r)\right)z= f_2^\infty(r) ,\\
     z(r_0)&=0, \quad z'(r_0)=G''(\zeta_0), \label{eq:29bnn}
\end{align}
with
\begin{equation}
  \label{eq:52}
  f_2^\infty(r):= -p(p-1)\phi_\infty^{p-2}(r) \psi_\infty^2(r)=
  -p(p-1)L^{p-2}r^{-\frac{2(p-2)}{p-1}} \psi_\infty^2(r)
\end{equation}
($\psi_\infty$ is as in \eqref{eq:39}).
We can also take the limit in the variation of constants formula for
$ z_2(\cdot,\al)$, namely, formula \eqref{eq:42n} with $k=2$,
$f_k=f_k(\cdot,\al)$, $\psi=\psi(\cdot,\al)$, and
$\varphi=\varphi(\codt,\al)$ -- the solution of the linear equation  
\eqref{eq:28a} with the initial conditions \eqref{eq:32}. This gives
\begin{equation}
      \label{eq:42nan}
        z_2^\infty(r)=-\psi_\infty(r) \int_r^{r_0}\omega(s)f_2^\infty(s)\varphi_\infty(s)\,ds  -
  \varphi_\infty(r)\int_0^{r}\omega(s)f_2^\infty(s)\psi_\infty(s)\,ds,
\end{equation}
where $\varphi_\infty$ is the solution of \eqref{eq:170} with
$\varphi_\infty(r_0)=0$, $\varphi'_\infty(r_0)=-1/\omega(r_0)$.
Note that $\psi_\infty$, $\varphi_\infty$ are linearly independent
solutions of the homogeneous equation \eqref{eq:170} and
\eqref{eq:42nan} is a version of the variation
of constant formula for the solution $z_2^\infty$; it is valid for all
$r>0$.

We next use the relations $F'(\zeta_0)=G'(\zeta_0)$ and \eqref{eq:50}
to show that the function $r^{2/(p-1)}z_2^\infty(r)$ is bounded as
$r\to\infty$. By \eqref{eq:53}, \eqref{eq:54},
\begin{equation}
  \label{eq:55}
  G''(\zeta_0)=(\tilde\partial_\ell)^2u_{r}(r_0,\ell)\rest_{\ell=L}.  
\end{equation}
Just like  $z_2^\infty$, 
the function
\begin{equation}
  \label{eq:51}
  \tilde z_2(r):=(\tilde\partial_\ell)^2u(r,\ell)\rest_{\ell=L}
= \frac{1}{u_\ell(r_0,L)}\partial_\ell\left(\frac{ u_\ell(r,\ell)}{u_\ell(r_0,\ell)}\right)\rest_{\ell=L}
\end{equation}
is a solution a nonhomogeneous linear equation, namely,
equation \eqref{eq:29aell} with the function $u_\ell(r,L)$ on the
right-hand side replaced by the function
\begin{equation*}
  \tilde\partial_\ell u(r,\ell)\rest_{\ell=L}=\frac{ u_\ell(r,L)}{u_\ell(r_0,L)}.
\end{equation*}
From Step 2 we know that this function is identical to
$\psi_\infty$. Thus $z_2^\infty$ and $\tilde z_2$ both solve equation 
\eqref{eq:29ann}. Also, since   $\tilde\partial_\ell u(r_0,\ell)=1$
for all $\ell\approx L$, we have 
$\tilde z_2(r_0)=0$. Thus, \eqref{eq:55} and \eqref{eq:29bnn} 
imply that
$\tilde z_2\equiv z_2^\infty$. The boundedness of
the function $r^{2/(p-1)}z^\infty_2(r)$ as $r\to\infty$
is now a consequence of \eqref{eq:51}, Remark \ref{rm:higher}
and the fact that the function $r^{2/(p-1)}u_\ell(r,L)$ 
is bounded as $r\to\infty$ (cp. Lemma \ref{le:shoot2}).

We now prove \eqref{eq:45}, making use of \eqref{eq:42nan}.
(Alternatively, one could invoke the Fredholm alternative for the
nonhomogeneous equation \eqref{eq:29ann} after estimating the solution
$z_2^\infty$ and its derivative near $r=0$.)
We need some information on the asymptotics
of the function $\varphi_\infty(r)$ as $r\to\infty$. 
Recall that the asymptotics of 
$\psi_\infty$ is the same as the asymptotics of the function $\psi_1$
given in \eqref{eq:31}. 

Similarly as for the functions  $\psi$,
$\varphi$, the Wronskian of the functions  $\varphi_\infty$,
$\psi_\infty$ satisfies the following identity
\begin{equation*}
 \psi'_\infty(r)\varphi_\infty(r)-\psi_\infty(r)\varphi'_\infty(r)=\frac{1}{\omega(r)} \quad(r>0). 
\end{equation*}
Therefore, for large enough $r$ we have ($\psi_\infty(r)\ne 0$ and)
\begin{equation}
  \label{eq:56}
  \frac{d}{dr}\,\frac{\varphi_\infty(r)}{\psi_\infty(r)}=-\frac1{\omega(r)\psi^2_\infty(r)}.
\end{equation}
Hence, for any $R>0$ there is a constant $c$ such that
\begin{equation*}
  \varphi_\infty(r)=\psi_\infty(r)\left(c-\int_R^r
  \frac1{\omega(s)\psi^2_\infty(s)}\,ds\right).  
\end{equation*}
Using this, the asymptotics of $\psi_\infty$, and expression
\eqref{eq:52},  one shows via a simple computation that the function
$\omega f_2^\infty\varphi_\infty$ is integrable on $(r_0,\infty)$.
Also, the growth of $\varphi_\infty$ and the boundedness of the function 
$r^{2/(p-1)}z_2^\infty(r)$ as $r\to\infty$
imply that the coefficient of $\varphi_\infty(r)$ 
in \eqref{eq:42nan} approaches 0 as $r\to\infty$, that
is, \eqref{eq:45} holds.   This proves our claim. 

It remains to prove that \eqref{eq:45} does not hold.
Recall, that we assuming that
 $\la=0$ is an eigenvalue  of \eqref{eq:17}, or, in other words, 
that for some $j\ge 2$ we have 
\begin{equation}
    \label{eq:18nn}
   \la_j= \frac{\be}2+\frac{1}{p-1}+j=0
\end{equation}
(cp. Lemma \ref{le:HV}). Also recall that $\psi_\infty$
is an eigenfunction associated  with $\la_j$. 
As shown in \cite{Herrero-V1,Mizoguchi:blowup}, up to a scalar
multiple, 
$\psi_\infty(r)=r^\beta M_j(r^2/4)$, 
where
\begin{equation}
  \label{eq:59}
  M_j(z):= M\Bigl(-j,\beta+\frac N2,z\Bigr)
\end{equation}
is the standard Kummer function.
The assumption $\la_j=0$ yields formula  \eqref{eq:46} for $p=p_j$
and $b:=\beta+\frac N2$ can be  expressed as 
\begin{equation}
  \label{eq:58}
  b= (N-4(j-1)^2)/(4j-2).
\end{equation}
Note that \eqref{eq:43nnot} 
implies that  $b-j-1>0$.

Using \eqref{eq:59}, \eqref{eq:58}
in the integral in \eqref{eq:45} and making the
substitution $z=s^2/4$, we see that \eqref{eq:45} is equivalent to 
the following relation
\begin{equation}
  \label{eq:57a}
   \int_0^{\infty}z^{b-j-2}e^{-z}M_j^3(z)\,dz=0.
\end{equation}
Since $j$ is an integer, the Kummer function $M_j$ can
be expressed in terms of a generalized Laguerre polynomial of degree $j$:
\begin{equation*}
  M(-j, b, z) = \frac{\Ga(j+1)\Ga(b)}
  {\Ga(j+b)}L(j, b-1, z)
\end{equation*}
($\Ga$ stands for the standard Gamma function).
Thus  we have the following relation equivalent 
to \eqref{eq:45}: 
\begin{equation}
  \label{eq:57b}
   \int_0^{\infty}z^{b-j-2}e^{-z}L^3(j, b-1, z)\,dz=0.
\end{equation}
Since $B:=b-j-1$ is positive, Proposition \ref{prop-Lag}
in the appendix shows that
the integral in \eqref{eq:57b} is positive. 
Thus \eqref{eq:45} does not hold, and the proof of Proposition
\ref{prop:derivatives} is complete.

\begin{remark}
  \label{rm:last}{\rm
  Returning to Remark \ref{rm:ketc},
  we comment on the validity of  relations \eqref{eq:23}, which could
  be used instead of Step 5 and the appendix in the
  proof of \eqref{eq:20}. By \eqref{eq:30} and the chain rule,
  the function 
  $s^{N-1}f_k(s,\al)\psi(s,\al)$ in \eqref{eq:28}
  contains in particular 
  the term
  $$p(p-1)\dots(p-k+1)s^{N-1}(w(s,\al))^{p-k}\psi^{k+1}(s,\al)$$ 
  whose
  limit as $\al\upto\infty$ is 
  $$L^{p-k} p(p-1)\dots(p-k+1)s^{N-1-\frac{2(p-k)}{p-1}}  (\psi_\infty(s))^{k+1}$$ 
  (cp. \eqref{eq:29}). This term has the singularity of
  \begin{equation*}
    s^ {N-1-\frac{2(p-k)}{(p-1)}+(k+1)\be}
  \end{equation*}
  at $s=0$. Since $\be+\frac{2}{p-1}=-2j$
  for some $j\ge 2$ (cp. \eqref{eq:18nn}),
  this singularity is not integrable near $0$ if $k$ is large enough.
  This makes it reasonable to expect that \eqref{eq:23} holds for 
  some $k\geq2$. 
  However, to make this into a proof, one would need to account
  for all the other terms in $s^{N-1}f_k(s,\al)\psi(s,\al)$ obtained
  from \eqref{eq:30}. It is difficult to keep track
  of  possible  cancellations of the singularities of these terms
  in the limit as $\al\to\infty$. 
  }
\end{remark}

\section{Appendix:  Integrals with Laguerre polynomials}

 \def\comb#1#2#3{\genfrac(){0pt}{#1}{#2}{#3}}

\begin{proposition}  \label{prop-Lag}
  Let
  \begin{equation}
    \label{eq:60}
    Q_j(B):=\frac1{\Gamma(B)}\int_0^\infty x^{B-1}e^{-x}L^3(j,B+j,x)\,dx,
  \end{equation}
where $B>0$, $j\geq2$ and
\begin{equation} \label{Lag1}
L(j,\alpha,x):=\sum_{i=0}^j(-1)^i\Bigl({j+\alpha\atop j-i}\Bigr){x^i\over i!}
\end{equation}
is the generalized Laguerre polynomial.
Then $Q_j$ is a polynomial in $B$
with positive coefficients; in particular, $Q(B)>0$ for any $B>0$. 
\end{proposition}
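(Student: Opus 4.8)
The plan is to establish an explicit formula for $Q_j(B)$ as a polynomial in $B$ and show all its coefficients are positive. First I would expand the cube $L^3(j,B+j,x)$ using the defining sum \eqref{Lag1}: writing $L(j,B+j,x)=\sum_{i=0}^j(-1)^i\binom{B+2j}{j-i}x^i/i!$, the cube becomes a triple sum over $i_1,i_2,i_3\in\{0,\dots,j\}$ with sign $(-1)^{i_1+i_2+i_3}$, binomial factors $\binom{B+2j}{j-i_1}\binom{B+2j}{j-i_2}\binom{B+2j}{j-i_3}$, and the monomial $x^{i_1+i_2+i_3}/(i_1!\,i_2!\,i_3!)$. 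Integrating term by term against $x^{B-1}e^{-x}$ and using $\int_0^\infty x^{B-1+m}e^{-x}\,dx=\Gamma(B+m)$, dividing by $\Gamma(B)$ gives $\Gamma(B+m)/\Gamma(B)=(B)_m=B(B+1)\cdots(B+m-1)$, a polynomial in $B$ with positive coefficients. So $Q_j(B)=\sum_{i_1,i_2,i_3}(-1)^{i_1+i_2+i_3}\frac{(B)_{i_1+i_2+i_3}}{i_1!i_2!i_3!}\binom{B+2j}{j-i_1}\binom{B+2j}{j-i_2}\binom{B+2j}{j-i_3}$, and the task is to show this is a polynomial in $B$ with positive coefficients despite the alternating signs and the fact that each binomial $\binom{B+2j}{j-i}$ is itself a polynomial in $B$ with mixed signs.

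The key idea to handle the cancellations is to recognize the binomial sums as the value at a point of a simpler object. Concretely, I would use the generating-function / Rodrigues-type identity for Laguerre polynomials, or better, the known integral evaluation of a product of three Laguerre polynomials. A cleaner route: substitute $\alpha=B+j$ and note that the weight $x^{B-1}=x^{\alpha-j-1}$ relative to the natural weight $x^\alpha e^{-x}$ for $L(j,\alpha,\cdot)$ is $x^{-j-1}$; but since we want a positivity statement, I would instead exploit the representation of $L(j,\alpha,x)$ via finite differences and reorganize the triple sum by the total degree $m=i_1+i_2+i_3$, collecting the coefficient of each falling factorial. The cleanest approach is probably to prove the stronger combinatorial statement that for each fixed $m$, the coefficient of $(B)_m$ (equivalently of the monomial basis after re-expansion) is nonnegative, by interpreting the binomial product as counting lattice paths or via a known connection-coefficient positivity (Laguerre polynomials have nonnegative linearization coefficients in a suitable normalization). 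Alternatively, one can use the integral identity $\int_0^\infty x^{\gamma}e^{-x}L(j,\alpha_1,x)L(j,\alpha_2,x)L(j,\alpha_3,x)\,dx$ evaluated in terms of hypergeometric series and check termwise positivity.

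The step I expect to be the main obstacle is precisely proving positivity of the coefficients after the alternating triple sum is assembled — showing that the minus signs from $(-1)^{i_1+i_2+i_3}$ and from the binomial coefficients $\binom{B+2j}{j-i}$ (which, as polynomials in $B$, have coefficients of both signs once $j-i\ge 1$) conspire to leave only positive coefficients. The resolution I would pursue is to first rewrite $\binom{B+2j}{j-i}=\frac{1}{(j-i)!}(B+2j)(B+2j-1)\cdots(B+j+i+1)$, a product of $j-i$ linear factors each with positive coefficients in $B$, so each individual binomial is a polynomial with positive coefficients in $B$; the only sign issue is then the global $(-1)^{i_1+i_2+i_3}$ against $(B)_m$ with $m=i_1+i_2+i_3$. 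Grouping terms with the same $m$, one needs $\sum_{i_1+i_2+i_3=m}(-1)^m\frac{1}{i_1!i_2!i_3!}\binom{B+2j}{j-i_1}\binom{B+2j}{j-i_2}\binom{B+2j}{j-i_3}$ summed against the positive $(B)_m$ to have, after full expansion in powers of $B$, only positive coefficients; I would verify this by induction on $j$ (the case $j=2$ by direct computation, serving as the base, and the inductive step using the three-term recurrence or the contiguous relation $L(j,\alpha,x)=L(j,\alpha+1,x)-L(j-1,\alpha+1,x)$ to reduce the degree), or by a generating-function argument that makes the positivity manifest as a product of series with nonnegative coefficients. Once positivity of every coefficient of $Q_j(B)$ is established, $Q_j(B)>0$ for all $B>0$ is immediate, which is exactly what the application in Step 5 requires via the identifications $B=b-j-1>0$ and $\alpha=B+j=b-1=\beta+\frac N2-1$.
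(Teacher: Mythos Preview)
Your expansion of $Q_j(B)$ as a triple sum over $i_1,i_2,i_3$ is correct, and you rightly identify that each factor $\binom{B+2j}{j-i}$ and $(B)_m$ is a polynomial in $B$ with positive coefficients, so the only obstruction is the alternating sign $(-1)^{i_1+i_2+i_3}$. However, the proposal stops precisely at the point where the actual work begins: you list several possible routes (induction on $j$, the contiguous relation $L(j,\alpha,x)=L(j,\alpha+1,x)-L(j-1,\alpha+1,x)$, generating functions, linearization positivity) but carry out none of them, and none of them is obviously going to work as stated. For instance, the contiguous relation you write shifts $\alpha$ and lowers $j$, but applying it to one factor of the cube yields a sum of integrals with \emph{different} second parameters, so you no longer have a pure $Q_{j'}(B')$ to which an inductive hypothesis applies; and known linearization/connection positivity results for Laguerre polynomials require the weight exponent to match the second parameter, which is exactly what fails here (your weight is $x^{B-1}=x^{\alpha-j-1}$, not $x^{\alpha}$). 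So as written there is a genuine gap: the crux of the proposition is untouched.

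The paper's proof takes a substantially different route that you may find instructive. Rather than expanding all three factors, it expands \emph{one} factor of $L(j,B+j,x)$ via the defining sum, absorbing the resulting $x^i$ into the weight to get $x^{B+i-1}$; then it rewrites each of the remaining two factors via the connection formula $L(j,B+j,x)=\sum_m\binom{2j-i-m}{j-m}L(m,B+i-1,x)$ and invokes \emph{orthogonality} of $\{L(m,B+i-1,\cdot)\}_m$ with respect to $x^{B+i-1}e^{-x}$. This collapses the problem to a double sum $\sum_{i,m}$ rather than a triple sum, and leads to $Q_j(B)=\frac{1}{j!}T_0^{2j}(B)\sum_m \frac{1}{m!}S_{j,m}(B)$ with $S_{j,m}(B)=\sum_i(-1)^i\binom{j}{i}\binom{2j-i-m}{j-m}^2/T_{m+i}^{j+i}(B)$. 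The remaining alternating sum is then handled by introducing a two-parameter generalization $S(j,k_1,k_2,B)$ and deriving a three-term recursion in $(j,k_1)$ (via the elementary identity $(j+k_1-i)^2=(j+k_1)^2-i(2j+2k_1-1)+i(i-1)$) that reduces to the explicitly evaluable case $k_1=0$ with only nonnegative coefficients appearing along the way. The use of orthogonality to kill one layer of summation is the step your plan is missing and is what makes the sign cancellation tractable.
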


Positivity of similar integrals involving Laguerre polynomials has
been established in a number of combinatorics papers
(see for example \cite{Foata-Z, SMA}
and references therein). However, in these papers special relations
between the exponent of $x$ and the second argument of $L$ are needed,
and we were not able to make use of the integrals or techniques in
these papers for proving Proposition \ref{prop-Lag}.
Our proof is completely independent.  

\begin{proof}[Proof of Proposition \ref{prop-Lag}]
Given integers $n_1,n_2$, set
$$T_{n_1}^{n_2}(B):=\prod_{n=n_1}^{n_2}(B+n)
\qquad \hbox{($T_{n_1}^{n_2}(B)=1$ if $n_1>n_2$)}.
$$
Let $0\leq i,m,n\leq j$. 
The recurrence relation for the Laguerre polynomials
and the orthogonality of these polynomials 
(see \cite[Chapter 22]{AS:bk}, for example) 
give 
\begin{equation} \label{Lag-recurr}
L(j,B+j,x)=\sum_{m=0}^j\Bigl({2j-i-m\atop j-m}\Bigr)L(m,B+i-1,x),
\end{equation}
and
\begin{equation} \label{Lag-ortho}
\begin{aligned}
\int_0^\infty &x^{B+i-1}e^{-x}L(m,B+i-1,x)L(n,B+i-1,x)\,dx  \\
 &= \frac{\Gamma(B+m+i)}{m!}\delta_{nm} 
  = \frac{\Gamma(B)}{m!}T_0^{m+i-1}(B)\delta_{nm}.
\end{aligned}
\end{equation}
Using \eqref{Lag1}, \eqref{Lag-recurr}, and  \eqref{Lag-ortho}, 
we obtain
$$\begin{aligned} 
  Q_j(B) &=\frac1{\Gamma(B)}\sum_{i=0}^j(-1)^i\Bigl({B+2j\atop j-i}\Bigr)\frac1{i!}
  \int_0^\infty x^{B+i-1}e^{-x}L^2(j,B+j,x)\,dx \\
 &=\frac1{j!}\sum_{i=0}^j(-1)^i\Bigl({j\atop i}\Bigr)
 T_{j+i+1}^{2j}(B)
  \sum_{m=0}^j\Bigl({2j-i-m\atop j-m}\Bigr)^2\,\frac1{m!}T_0^{m+i-1}(B) , 
\end{aligned}$$
hence
$$ 
 Q_j(B) = \frac1{j!}T_0^{2j}(B) \sum_{m=0}^j\frac1{m!}S_{j,m}(B),
$$
where
$$ 
  S_{j,m}(B):=
   \sum_{i=0}^j(-1)^i\Bigl({j\atop i}\Bigr)\Bigl({2j-i-m\atop j-m}\Bigr)^2
    \frac1{T_{m+i}^{j+i}(B)}.
$$
We show that the polynomial $T_0^{2j}(B) S_{j,m}(B)$ has positive coefficients
for each $m=0,1,\dots,j$.

Given $j\geq0$, $0\leq k_1\leq k_2$ and $B>k_2-j$, set
$$ S(j,k_1,k_2,B):=\sum_{i=0}^j (-1)^i \Bigl({j\atop i}\Bigr)
  \Bigl({j+k_1-i\atop k_1}\Bigr)^2 \frac1{T_{j-k_2+i}^{j+i}(B)}.$$ 
Notice that $S(j,j-m,j-m,B)=S_{j,m}(B)$.

Using the identities
$$\Bigl({j+1\atop i}\Bigr)=\Bigl({j\atop i}\Bigr)+\Bigl({j\atop i-1}\Bigr), \quad
T_{j+1-k+i}^{j+1+i}(B)=T_{j-k+i}^{j+i}(B+1)$$
and the induction in $j$,
one easily obtains
\begin{equation} \label{Lag2}
 S(j,0,k_2,B)=\frac{(j+k_2)!}{k_2!}\,\frac1{T_{j-k_2}^{2j}(B)},
\qquad j,k_2\geq0,\ B>k_2-j. 
\end{equation}

Next assume $j\geq2$, $0<k_1\leq k_2$, and $B>k_2-j$. 
Using the identity
$$ (j+k_1-i)^2 = (j+k_1)^2-i(2j+2k_1-1)+i(i-1) $$
and denoting
$$ T_i:=T_{j-k_2+i}^{j+i}(B)=T_{j-1-k_2+i-1}^{j-1+i-1}(B+2)=T_{j-2-k_2+i-2}^{j-2+i-2}(B+4) $$
we obtain
$$ \begin{aligned}
&S(j,k_1,k_2,B)  \\ 
&=\frac1{k_1^2}
\sum_{i=0}^j (-1)^i \Bigl({j\atop i}\Bigr) 
  \Bigl({j+k_1-1-i\atop k_1-1}\Bigr)^2 
\frac{[(j+k_1)^2-i(2j+2k_1-1)+i(i-1)]}{T_i} \\ 
&= \frac{(j+k_1)^2}{k_1^2}\sum_{i=0}^j (-1)^i \Bigl({j\atop i}\Bigr)
   \Bigl({j+k_1-1-i\atop k_1-1}\Bigr)^2 
 \frac1{T_i} \\ 
 &\quad+ \frac{(2j+2k_1-1)j}{k_1^2}\sum_{i=1}^{j} (-1)^{i-1} \Bigl({j-1\atop i-1}\Bigr)
     \Bigl({j-1+k_1-1-(i-1)\atop k_1-1}\Bigr)^2 
  \frac1{T_i} \\ 
 &\quad+ \frac{j(j-1)}{k_1^2}\sum_{i=2}^{j} (-1)^{i-2} \Bigl({j-2\atop i-2}\Bigr)
    \Bigl({j-2+k_1-1-(i-2)\atop k_1-1}\Bigr)^2 
 \frac1{T_i} \\ 
&=\frac{(j+k_1)^2}{k_1^2}S(j,k_1-1,k_2,B)
 + \frac{(2j+2k_1-1)j}{k_1^2}S(j-1,k_1-1,k_2,B+2) \\
 &\qquad\qquad+ \frac{j(j-1)}{k_1^2}S(j-2,k_1-1,k_2,B+4).
\end{aligned}$$
Repeating this argument finitely many times, we obtain
\begin{equation} \label{Lag3}
 S(j,k_1,k_2,B)=\sum_{(\tilde j,\tilde k_1)\in A} 
  c_{\tilde j,\tilde k_1}S(\tilde j,\tilde k_1,k_2,B+2(j-\tilde j)),
\end{equation}
where 
$$A=\{(\tilde j,\tilde k_1): 0\leq\tilde j\leq j,\ 0\leq\tilde k_1\leq k, \hbox{  either }
   \tilde k_1=0\hbox{ or }\tilde j\leq1\},$$
$c_{\tilde j,\tilde k_1}\geq0$ and $\sum_A c_{\tilde j,\tilde k_1}>0$.
 
Fix $j\geq2$, $0\leq m\leq j$, $k_1=k_2=j-m$, $B>0$, and
let $(\tilde j,\tilde k_1)\in A$.
Then \eqref{Lag2} and the definition of $S(j,k_1,k_2,B)$ imply
$$  S(\tilde j,\tilde k_1,k_2,B+2(j-\tilde j))
 = \begin{cases}
 \displaystyle \frac{(\tilde j+k_2)!}{k_2!}\,\frac1{T_{j-k_2+(j-\tilde j)}^{2j}(B)} 
   &\hbox{if $\tilde k_1=0$}, \\ \\
  \displaystyle \frac1{T_{2j-k_2}^{2j}(B)} 
   &\hbox{if $\tilde j=0$}, \\ \\
  \displaystyle \frac{(\tilde k_1^2+2\tilde k_1)(B+2j)+k_2+1}{T_{2j-1-k_2}^{2j}(B)}
    &\hbox{if $\tilde j=1$}.
\end{cases}$$
This and \eqref{Lag3} imply the desired conclusion. 
\end{proof}

\bibliographystyle{amsplain} 

\providecommand{\bysame}{\leavevmode\hbox to3em{\hrulefill}\thinspace}
\providecommand{\MR}{\relax\ifhmode\unskip\space\fi MR }
\providecommand{\MRhref}[2]{%
  \href{http://www.ams.org/mathscinet-getitem?mr=#1}{#2}
}
\providecommand{\href}[2]{#2}

\end{document}